\newtheorem{thm}{Theorem}[section]
\newcommand{\norm}[1]{\left\lVert#1\right\rVert}
\newtheorem{lem}{Lemma}[section]
\newtheorem{rem}{Remark}[section]
\newtheorem{Def}{Definition}[section]
\newtheorem{Ex}{Example}[section]
\newtheorem{Ass}{Assumption}[section]
\let\originalleft\left
\let\originalright\right
\renewcommand{\left}{\mathopen{}\mathclose\bgroup\originalleft}
\renewcommand{\right}{\aftergroup\egroup\originalright}
\title[Approximate controllability of second order systems]{Approximate controllability of non-autonomous second order impulsive functional evolution equations in Banach spaces}
\author [Sumit Arora, Soniya Singh,  Manil T. Mohan and Jaydev dabas]{}
\subjclass[2020]{34K06, 34A12, 37L05, 93B05}
\keywords{Abstract functional evolution equations, Non-instantaneous impulses, Approximate controllability, Evolution operator, Cosine family.}
\email{sonia.iitd.21@gmail.com}
\email{sumit@as.iitr.ac.in}
\email{maniltmohan@ma.iitr.ac.in, maniltmohan@gmail.com}
\email{jay.dabas@gmail.com}
\thanks{$^*$Corresponding author: Jaydev Dabas}
\begin{document}
	\maketitle
	\centerline{\scshape Sumit Arora, Soniya Singh}
	\medskip
	{\footnotesize
		\centerline{Department of Applied Science and Engineering}
		\centerline{Indian Institute of Technology Roorkee,}
		\centerline{Roorkee, Uttarakhand 247667, India.}
	} 
	
	\medskip
	\centerline{\scshape Manil T. Mohan}
	\medskip
	{\footnotesize
		
		\centerline{Department of Mathematics}
		\centerline{Indian Institute of Technology Roorkee,}
		\centerline{Roorkee, Uttarakhand 247667, India.}
	}
	\medskip
	
	\centerline{\scshape Jaydev Dabas$^*$}
	\medskip
	{\footnotesize
		\centerline{Department of Applied Science and Engineering}
		\centerline{Indian Institute of Technology Roorkee,}
		\centerline{Roorkee, Uttarakhand 247667, India.}
	}
	\bigskip
	\begin{abstract}
		This article investigates the approximate controllability of second order non-autonomous functional evolution equations involving non-instantaneous impulses and nonlocal conditions. First, we discuss the approximate controllability of second order linear system in detail, which lacks in the existing literature. Then, we derive sufficient conditions for approximate controllability of our system in separable reflexive Banach spaces via linear evolution operator, resolvent operator conditions, and Schauder's fixed point theorem. Moreover, in this paper, we define proper identification of resolvent operator in Banach spaces. Finally, we verify our results to examine the approximate controllability of the non-autonomous wave equation with non-instantaneous impulses and finite delay in the application section.
	\end{abstract}
	\section{Introduction}\label{intro} \setcounter{equation}{0}
	Controllability is one of the fundamental notions in mathematical control theory and plays a crucial role in various control problems such as the time optimal control problems \cite{VB},  irreducibility of transition semigroups \cite{GDJZ}, stabilization of unstable systems via feedback control \cite{VB1} etc. In the finite  dimensional settings, the problems of exact and approximate controllability are same, whereas in the infinite dimensions, one has to distinguish these two concepts. Exact controllability refers that the solution of a control system can steer from an arbitrary initial state to a desired final state, while the approximate controllability means that the solution enables to steered an arbitrary small neighborhood of a final state. In the infinite dimensional case, the approximately controllable systems are more adequate and have extensive range of applications (cf. \cite{MN,TRR,TR,EZ}, etc). In the past two decades, a good  number of publications discussed the problems of existence and approximate controllability of non-linear evolution systems (in Hilbert and Banach spaces), see for instance, \cite{AM,AJ,FUX,AG,HR,MN,RK,SS}, etc and the references therein.
	
	There are many dynamical systems with nonlocal initial conditions are more suitable than regular initial conditions. For example, the diffusion equation with boundary conditions of  an integral form and  Hydratational heat in which the intensity of heat sources depends on the amount of heat already produced, see \cite{nt}.  Byszewski  and Lakshmikantham \cite{by} first discussed the abstract Cauchy problem with nonlocal initial conditions and established the existence and uniqueness of a mild solution for that problem. In the recent times, several papers appeared in the literature dealing with the approximate controllability problem of the non-linear evolution systems with nonlocal initial conditions, see for instance, \cite{AS,UA,SH1,VV}, etc. On the other hand, there are many physical phenomena in which the current state of a system is influenced by the previous states. These types of phenomena is suitably modeled by delay differential equations, which naturally occurs in ecological models, neural network, inferred grinding models, logistic reaction-diffusion model with delay, etc (cf. \cite{FC,LA,NJ}, etc). 
	
	Many evolutionary processes such as harvesting, shocks, and natural disasters etc, are experience abrupt changes in their states for negligible time instants. Generally, these short-term perturbations are estimated as instantaneous impulses and such processes are mathematically modeled by impulsive differential and partial differential equations. The theory of impulsive evolution equations has found various applications such as threshold, bursting rhythm models in medicine, optimal control models in economics, paramacokinetics and frequency modulated systems, for more details, one can refer the monographs \cite{BM} and \cite{LVB}. In the past few years, many works formulated sufficient conditions for the  approximate controllability of the impulsive systems with instantaneous impulses, see for instance, \cite{AS,SS,RSA,YZ}, etc and the references therein. Moreover, the evolution processes in pharmacotherapy such as the distribution of drugs in the bloodstream and the consequent absorption of the body are a moderate and continuous process and the dynamics of such process can not be interpret by the instantaneous impulsive models.  Therefore, this situation can be analyzed as an impulsive action that starts suddenly and remains active over a finite time interval. Hence, the dynamics of such phenomena is characterized by non-instantaneous impulsive systems. 
	
	Hern\' {a}ndez and O'Regan \cite{HE} first considered a non-instantaneous impulsive abstract differential equation and studied the global solvability of that system. Later, Feckan et al. in \cite{FM} altered the impulsive conditions considered in \cite{HE} and investigate the existence and uniqueness of solutions. In \cite{PM}, Pierri et al. discussed a global solution for a non-instantaneous impulsive evolution equations. Recently, a few developments have been reported on the controllability problems of the non-instantaneous impulsive systems. In \cite{KA,MM}, Muslim et al., considered a nonlinear second order control system with non-instantaneous impulses and studied the existence and stability of the solution and also proved the exact controllability of the considered  system. Ahmed and his co-authors in \cite{AH} examined the approximate controllability of the non-instantaneous impulsive  Hilfer fractional neutral stochastic integro-differential equations with fractional Brownian motion and nonlocal condition.

	In the literature, it has been observed that the problem of approximate controllability for the first and second order autonomous systems are extensively studied. However, the approximate controllability results for the non-autonomous semilinear systems remains limited. There are a few publications available on the approximate controllability of the first and second order non-autonomous semilinear systems, see for  example, \cite{AM,FUX,FUH,KV,MV,RK}, etc.  In \cite{MV}, Mahmudov et al. considered the non-autonomous second order differential inclusions and investigated the approximate controllability results in Hilbert space by applying the Bohnenblust-Karlin's fixed point theorem. Also, they developed approximate controllability for an impulsive system with nonlocal initial conditions. Kumar and his co-authors in \cite{KV} investigated the approximate controllability of a second order non-autonomous system with finite delay in Banach spaces by applying a fixed point approach. The resolvent operator considered in that work is well define only if the state space is a Hilbert space (see, Remark \ref{lem2.1} below). So, it seem to the authors that the results developed in \cite{KV} are reasonable only in separable Hilbert spaces. Moreover, to the best of author's knowledge, the study of the approximate controllability of second order non-autonomous abstract Cauchy problems with non-instantaneous impulses in Banach spaces is leftover in the literature.

	Motivated from the above discussions, in this paper, we formulate sufficient conditions for the approximate controllability of the following non-autonomous second order impulsive evolution equations consist of non-instantaneous impulses and finite delay:
	\begin{subequations}
		\begin{align}
		\label{1.1} {x}''(t)&= \mathrm{A}(t)x(t)+\mathrm{B}u(t)+f(t,x_{t}),\ t\in \bigcup_{i=0}^{N}(s_i,t_{i+1}]\subset J =[0,T],\\
		\label{1} x(t)&=\rho_i(t,x(t_i^{-})),\ t\in(t_i,s_i],\ i=1,\dots,N,\\
		\label{1.3} x'(t)&=\rho'_i(t,x(t_i^{-})), \ t\in(t_i,s_i],\ i=1,\dots,N,\\
		\label{1.4} x(t)&=\phi(t), \ t \in [-q,0), \ q>0, \\ \label{15}  x(0)&=\phi(0)+g(x),\\
		\label{1.5}	x'(0)&=\eta+h(x),\end{align}
	\end{subequations}
	where 
	\begin{itemize}
		\item 	the state variable $x(\cdot)$ takes values in a separable reflexive Banach space $\mathbb{X}$ having strictly convex dual $\mathbb{X}^*$ and $\mathbb{U}$ is a separable Hilbert space, 
		\item the family of linear operators $\{\mathrm{A}(t) : t \in J\}$ on $\mathbb{X}$ is closed and domain is dense in $\mathbb{X}$, \item  the operator $\mathrm{B} : \mathbb{U}\rightarrow	\mathbb{X}$ is  bounded with $\left\|\mathrm{B}\right\|_{\mathcal{L}(\mathbb{U};\mathbb{X})}\le M_B,$ where $\mathcal{L}(\mathbb{U};\mathbb{X})$ denotes the space of bounded linear operators from $\mathbb{U}$ to $\mathbb{X}$, and the control function $u \in  \mathrm{L}^2(J;\mathbb{U)}$,  \item  the nonlinear function $f :\bigcup_{i=0}^{N}[s_i,t_{i+1}]\times \mathcal{D}\rightarrow \mathbb{X},$ (see \eqref{1.2} below for the definition of $\mathcal{D}$),\item  the functions $g,h:\mathrm{PC}(J_q;\mathbb{X})\to\mathbb{X}, J_q=[-q,T]$ (see \eqref{29} below for the definition of the space $\mathrm{PC}(J_q;\mathbb{X})$), \item the fixed points $s_i$ and $t_i$ satisfy  $0=t_0=s_0<t_1\le s_1\le t_2<\ldots<t_N\le s_N\le t_{N+1}=T$ and $x(t_i^+)$ and $x(t_i^-)$ exist for $i = 1,\dots, N$ with $x(t_i^-)=x(t_i)$, \item  for every $t \in J,\ x_t \in \mathcal{D}$ and we define $x_t(s)= x(t + s),\ -q\le s< 0$, \item the functions $\rho_i(t,x(t_i^-)) :[t_i,s_i]\times\mathbb{X}\rightarrow\mathbb{X}$ and their derivatives $\rho'_i(t,x(t_i^-)):[t_i,s_i]\times\mathbb{X}\rightarrow\mathbb{X},$ for $i=1,\dots,N$ represent the non-instantaneous impulses.
	\end{itemize}
	
	The rest of the manuscript is structured as follows: In section \ref{pre}, we review some fundamental definitions, important results and provide the assumptions which is useful to develop our results in subsequent sections. In section \ref{SNAS}, we formulate sufficient condition for the approximate controllability of the non-autonomous system  \eqref{1.1}-\eqref{1.5}. To determine this, we first investigate the approximate controllability of the linear control system corresponding to the problem \eqref{1.1}-\eqref{1.5}. Further, we prove the existence of a mild solution for the system  \eqref{1.1}-\eqref{1.5} via Schauder's fixed point theorem. Then, we establish the approximate controllability of the considered system. In the final section, one notable example of wave equation with non-instantaneous impulses and finite delay is discussed.	
	\section{Preliminaries}\label{pre}\setcounter{equation}{0}
	In present section, we recall some basic definitions, results and assumptions that will be useful in succeeding sections. 
	
	Recently, the study of the non-autonomous abstract second order initial value problem is more extensive. Let us consider the following system:
	\begin{equation}\label{2.3}
	\left\{
	\begin{aligned}
	{x}''(t)&= \mathrm{A}(t)x(t)+f(t),\  0\le s, t\le T,\\
	x(s)&=v, \ {x}'(s)=w,
	\end{aligned}
	\right.
	\end{equation} 
	where the operator $\mathrm{A}(t) : \mathrm{D}(\mathrm{A(t)}) \subseteq \mathbb{X} \rightarrow \mathbb{X}$ is closed and densely defined for each $t\in J$ and the function $f : J \rightarrow \mathbb{X}$ is define suitably. Many works discussed the existence of a solution to the problem \eqref{2.3}, we refer the interested readers to \cite{Bo,MK,Yl,Eo} and the reference therein. In these works, generally the existence of a solution of the system \eqref{2.3} is depends on the existence of an evolution operator $\mathcal{S}(t, s)$ for the equation
	\begin{align}
	{x}''(t)&= \mathrm{A}(t)x(t),\ t\in J.
	\end{align}
	Throughout this work, we consider that the domain of $\mathrm{A}(t)$ is a subspace $\mathrm{D}(\mathrm{A})$, which is independent of $t$ and dense in $\mathbb{X}$. We also assumed the function $t \mapsto \mathrm{A}(t)x$ is continuous for each $x \in \mathrm{D}(\mathrm{A})$. 
	
	\subsection{Evolution operator and cosine family}  
	
	We now introduce the following concept of evolution operator given in \cite{MK}.	
	\begin{Def}\label{def2.1}
		A map $\mathcal{S} : J \times J \rightarrow \mathcal{L}(\mathbb{X};\mathbb{X})=:\mathcal{L}(\mathbb{X})$ said to be an evolution operator if the following conditions are satisfied:
		\begin{itemize}
			\item[(D1)] The map $ (t, s) \rightarrow \mathcal{S}(t, s)x$ is continuously differentiable for each $x\in\mathbb{X}$ and
			\subitem(a) $\mathcal{S} (t, t)=0$ for $t\in J$,
			\subitem(b) for each $ x \in \mathbb{X}$ and $ t,s \in J$,
			\begin{align*}
			\frac{\partial }{\partial t} \mathcal{S}(t, s) x |_{t=s}=x \ \mbox{and} \ \frac{\partial }{\partial s} \mathcal{S}(t, s) x |_{t=s}=-x.
			\end{align*}
			\item[(D2)] If $x\in\mathrm{D}(\mathrm{A})$, then $ \mathcal{S}(t, s)x \in  \mathrm{D}( \mathrm{A})$ for all $t, s \in J$ and the map $(t, s) \mapsto\mathcal{S}(t, s)x$ is of class $C^2$ and
			\subitem(a)$\frac{\partial^2 }{\partial t^2}\mathcal{S}(t, s)x=\mathrm{A}(t)\mathcal{S}(t, s)x$,
			\subitem(b)$\frac{\partial^2 }{\partial s^2}\mathcal{S}(t, s)x=\mathcal{S}(t, s)\mathrm{A}(s)x$,
			\subitem(c)$\frac{\partial^2 }{\partial s \partial t}\mathcal{S}(t, s)x\vert_{t=s}=0$.
			\item[(D3)]For all $t, s \in J$, if $x\in \mathrm{D}(\mathrm{A})$, then $ \frac{\partial }{\partial s}\mathcal{S}(t, s)x \in  \mathrm{D}( \mathrm{A})$, the derivatives  $ \frac{\partial^3 }{\partial t^2\partial s}\mathcal{S}(t, s)x$, $ \frac{\partial^3 }{\partial s^2\partial t}\mathcal{S}(t, s)x $ exist and
			\subitem(a)$ \frac{\partial^3 }{\partial t^2\partial s}\mathcal{S}(t, s)x =\mathrm{A}(t)\frac{\partial }{\partial s}\mathcal{S}(t, s)x$,
			\subitem(b)$ \frac{\partial^3 }{\partial s^2\partial t}\mathcal{S}(t, s)x =\frac{\partial }{\partial t}\mathcal{S}(t, s)\mathrm{A}(s)x$. 
			
			Moreover, the map $(t,s)\mapsto \frac{\partial }{\partial s}\mathcal{S}(t, s)x$ is continuous. 
		\end{itemize}
		
	\end{Def}
	Let us assume that there exists an evolution operator $\mathcal{S}(t,s)$ associated to the operator $\mathrm{A}(t)$.  We define the operator $\mathcal{C}(t, s)=-\frac{\partial }{\partial s}\mathcal{S}(t, s)$ and there exists a set of positive constants $M,\ \tilde{M}$ and $N$ such that 
	\begin{align}
	\sup\limits_{0\le s, t\le T}\norm{\mathcal{C}(t, s)}_{\mathcal{L}(\mathbb{X})}&\le M,\label{23}\\
	\sup\limits_{0\le s, t\le T}\norm{\mathcal{S}(t, s)}_{\mathcal{L}(\mathbb{X})}&\le \tilde{M},\label{24}\\
	\norm{\mathcal{S}(t+\tau, s)-\mathcal{S}(t, s)}_{\mathcal{L}(\mathbb{X})}&\le N |\tau|, \ \mbox{ for all }\ \ {t,t+\tau,s\in J}.\label{25}
	\end{align}
	Moreover, various approach have been discussed in the literature about the existence of the evolution operator $\mathcal{S}(\cdot, \cdot)$ (cf. \cite{Bo,MK,Yl,Eo,Oe,SH}). A very often studied situation is that the operator $\mathrm{A}(t)$ is the perturbation of an operator $\mathrm{A}$, which generates a strongly continuous cosine family. Therefore, it is necessary to review some  properties of the cosine family. 
	
	Let $\mathrm{A} : \mathrm{D}(\mathrm{A}) \subseteq \mathbb{X} \rightarrow \mathbb{X}$ be an infinitesimal generator of a strongly continuous cosine family $\{\mathrm{C}_0(t): t\in\mathbb{R}\}$ of bounded linear operators on $\mathbb{X}$ and the associated sine family $\{\mathrm{S}_0(t) : t \in \mathbb{R}\}$ on $\mathbb{X}$ is defined as
	\begin{align*}
	\mathrm{S}_0(t)x=\int_{0}^{t}\mathrm{C}_0(s)x\mathrm{d}s,\ x\in\mathbb{X},\ t\in\mathbb{R}.
	\end{align*}
	Moreover
	\begin{align*}
	\mathrm{C}_0(t)x-x=\mathrm{A}\int_{0}^{t}\mathrm{S}_0(s)x\mathrm{d}s,\ x\in\mathbb{X},\ t\in\mathbb{R}.
	\end{align*}
	The infinitesimal generator $\mathrm{A}$ of a strongly continuous cosine family  $\{\mathrm{C}_0(t) : t \in \mathbb{R}\}$ is defined as
	\begin{align*}
	\mathrm{A}x&=\frac{\mathrm{d}^2}{\mathrm{d}t^2}\mathrm{C}_0(t)x\Big|_{t=0},\ x\in \mathrm{D}(\mathrm{A}),
	\end{align*}
	where 
	\begin{align*}
	\mathrm{D}(\mathrm{A})&=\{x\in \mathbb{X}:\mathrm{C}_0(t)x\text{ is twice continuously differentiable function in }t \},
	\end{align*}
	equipped with the graph norm
	\begin{align*}
	\norm{x}_{\mathrm{D}(\mathrm{A})}&=\norm{x}_{\mathbb{X}}+\norm{\mathrm{A}x}_{\mathbb{X}},\ x\in \mathrm{D}(\mathrm{A}).
	\end{align*}
	We also define the set
	\begin{align*}
	\mathrm{E}&=\{x\in \mathbb{X}:\mathrm{C}_0(t)x\text{ is once continuously differentiable function of }t \},
	\end{align*}
	endowed with the norm
	\begin{align*}
	\norm{x}_{1}&=\norm{x}_{\mathbb{X}}+\sup\limits_{0\le t\le1}\norm{\mathrm{A}\mathrm{C}_0(t)x}_{\mathbb{X}},\ x\in \mathrm{E},
	\end{align*}
	which forms a Banach space (see \cite{KJ}). Moreover, the operator $\mathcal{A}=\begin{pmatrix}
		0 & \mathrm{I} \\
		\mathrm{A} & 0\\
	\end{pmatrix}$ defined on $\mathrm{D}(\mathrm{A})\times \mathrm{E}$ generate a strongly continuous group of bounded linear operators 
	\begin{align*}
	\mathcal{H}(t)=\begin{pmatrix}
	\mathrm{C}_0(t) & \mathrm{S}_0(t) \\
	\mathrm{A}\mathrm{S}_0(t) & \mathrm{C}_0(t)\\
	\end{pmatrix}
	\end{align*}
  on the space $\mathrm{E}\times\mathbb{X},$ (see Proposition 2.6, \cite{CT}). From this fact, it follows that $\mathrm{A}\mathrm{S}_0(t) : \mathrm{E} \rightarrow \mathbb{X}$ is a bounded linear operator such that $$\mathrm{A}\mathrm{S}_0(t)x \rightarrow 0\ \text{ as }\ t \rightarrow 0, \ \mbox{ for each }\ x \in \mathrm{E}.$$
	
	Travis and Webb  \cite{CT,TCC,CT1} discussed existence results of the following  second order abstract Cauchy problem:
	\begin{equation}\label{2.7}
	\left\{
	\begin{aligned}
	{x}''(t)&= \mathrm{A}x(t)+f(t),\  t\in J,\\
	x(s)&=v, \ {x}'(s)=w.
	\end{aligned}
	\right.
	\end{equation} 
	If the function $f : J \rightarrow \mathbb{X}$ is integrable, then a continuous function $x:[0,T]\to\mathbb{X}$  given by
	\begin{align}\label{2.8}
	x(t)&=\mathrm{C}_0(t-s)v+\mathrm{S}_0(t-s)w+\int_{s}^{t}\mathrm{S}_0(t-\tau)f(\tau)\mathrm{d}\tau,
	\end{align}
	is called a \emph{mild solution} of \eqref{2.7}. Moreover, when $v \in \mathrm{E}$, the function $x(\cdot)$ is continuously differentiable and
	\begin{align*}
	x'(t)&=\mathrm{A}\mathrm{S}_0(t-s)v+\mathrm{C}_0(t-s)w+\int_{s}^{t}\mathrm{C}_0(t-\tau)f(\tau)\mathrm{d}\tau.
	\end{align*}
	Such a solution \eqref{2.8} is called a \emph{strong solution}. Furthermore, if $v \in \mathrm{D}(\mathrm{A}),\ w \in \mathrm{E}$ and $ f$ is a continuously differentiable function, then the function $x(\cdot)$ given in \eqref{2.8} becomes a \emph{classical solution} of the initial value problem \eqref{2.7}.
	
	Now, we assume that $\mathrm{A}(t) = \mathrm{A} + \mathrm{F}(t),$ where $\mathrm{F} : \mathbb{R} \rightarrow \mathcal{L}(\mathrm{E};\mathbb{X})$ is a map such that the function $t \mapsto \mathrm{F}(t)x$ is continuously differentiable in $\mathbb{X}$ for each $x\in \mathrm{E}$. Serizawa \cite{SH} established that, if $v \in\mathrm{D}(\mathrm{A}),\ w \in\mathrm{E},$ the following non-autonomous system 
	\begin{equation}\label{2.9}
	\left\{
	\begin{aligned}
	{x}''(t)&= (\mathrm{A}+\mathrm{F}(t))x(t)+f(t),\  t\in J,\\
	x(0)&=v, \ {x}'(0)=w,
	\end{aligned}
	\right.
	\end{equation} 
	has a unique solution $x(\cdot)$ such that the function $t \mapsto x(t)$ is continuously differentiable in $\mathrm{E}$. The same argument follows to conclude that the system \eqref{2.9} along with the initial condition given in the problem  \eqref{2.7} has a unique solution $x(\cdot, s)$ such that the function $t\mapsto x(t, s)$ is continuously differentiable in $\mathrm{E}$. From \eqref{2.8}, we infer that the solution $x(t,s)$ can be written as 
		\begin{align}\label{2.10}
	x(t,s)&=\mathrm{C}_0(t-s)v+\mathrm{S}_0(t-s)w+\int_{s}^{t}\mathrm{S}_0(t-\tau)\mathrm{F}(\tau)x(\tau,s)\mathrm{d}\tau.
	\end{align}
	Particularly, for $v = 0$, we have
	\begin{align*}
	x(t,s)&=\mathrm{S}_0(t-s)w+\int_{s}^{t}\mathrm{S}_0(t-\tau)\mathrm{F}(\tau)x(\tau,s)\mathrm{d}\tau.
	\end{align*}
	Therefore,
	\begin{align*}
	\norm{x(t,s)}_{1}&\le\norm{\mathrm{S}_0(t-s)}_{\mathcal{L}(\mathbb{X};\mathrm{E})}\norm{w}_{\mathbb{X}}+\int_{s}^{t}\norm{\mathrm{S}_0(t-\tau)}_{\mathcal{L}(\mathbb{X};\mathrm{E})}\norm{\mathrm{F}(\tau)}_{\mathcal{L}(\mathrm{E};\mathbb{X})}\norm{x(\tau,s)}_{1}\mathrm{d}\tau,
	\end{align*} and by using the Gronwall-Bellman lemma, we obtain
	\begin{align}\label{2.11}
	\norm{x(t,s)}_{1}&\le C\norm{w}_{\mathrm{E}}.	
	\end{align}
	Let us now define the operator $$\mathcal{S}(t, s)w = x(t, s).$$  The estimate \eqref{2.11} guarantees that the operator $\mathcal{S}(\cdot,\cdot)$ is bounded on $\mathrm{E}$. Since $\mathrm{E}$ is dense in $\mathbb{X}$, the linear operator $\mathcal{S}(\cdot,\cdot)$ can be extended to $\mathbb{X}$ and we still denoted it by $\mathcal{S}(\cdot,\cdot)$ itself. 
	
	\begin{thm}[Theorem 1.2, \cite{HR}]\label{th2.1}
		Under the preceding conditions on $\mathrm{A}$ and $\mathrm{F}(\cdot)$, $\mathcal{S}(\cdot,\cdot)$ is an evolution	operator for the system \eqref{2.9}. Moreover, if the sine family $\mathrm{S}_0(t)$ is compact for all $t \in J$ implies that the evolution operator  $\mathcal{S}(t, s)$ is also compact	for all $0\leq s \le t\leq T$.
	\end{thm}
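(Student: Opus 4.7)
The plan is to verify that $\mathcal{S}(t,s)$, defined via the Volterra integral equation arising from (2.10) with $v=0$, namely
$$\mathcal{S}(t,s)w = \mathrm{S}_0(t-s)w + \int_s^t \mathrm{S}_0(t-\tau)\mathrm{F}(\tau)\mathcal{S}(\tau,s)w\,\mathrm{d}\tau,$$
satisfies each of the conditions (D1)-(D3) in Definition \ref{def2.1}, and then to deduce compactness from this representation together with the assumed compactness of $\mathrm{S}_0(\cdot)$.

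For (D1), setting $t=s$ in the integral equation gives $\mathcal{S}(s,s)w = \mathrm{S}_0(0)w = 0$, so (D1)(a) holds. Differentiating with respect to $t$ under the integral sign, the boundary term vanishes because $\mathrm{S}_0(0)=0$, and I obtain
$$\frac{\partial}{\partial t}\mathcal{S}(t,s)w = \mathrm{C}_0(t-s)w + \int_s^t \mathrm{C}_0(t-\tau)\mathrm{F}(\tau)\mathcal{S}(\tau,s)w\,\mathrm{d}\tau,$$
which at $t=s$ equals $\mathrm{C}_0(0)w = w$. The symmetric calculation in $s$ yields $-w$, establishing (D1)(b). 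For (D2), assuming $w\in\mathrm{D}(\mathrm{A})$, I would differentiate the above expression once more in $t$. The boundary contribution is $\mathrm{C}_0(0)\mathrm{F}(t)\mathcal{S}(t,s)w = \mathrm{F}(t)\mathcal{S}(t,s)w$, and using $\frac{\mathrm{d}}{\mathrm{d}t}\mathrm{C}_0(r) = \mathrm{A}\mathrm{S}_0(r)$ together with the identity $\mathrm{A}\mathrm{S}_0(r) = \mathrm{S}_0(r)\mathrm{A}$ on $\mathrm{D}(\mathrm{A})$, the remaining terms combine to $\mathrm{A}\mathcal{S}(t,s)w$. Hence $\partial_t^2\mathcal{S}(t,s)w = (\mathrm{A}+\mathrm{F}(t))\mathcal{S}(t,s)w = \mathrm{A}(t)\mathcal{S}(t,s)w$, provided one first verifies that $\mathcal{S}(t,s)w\in\mathrm{D}(\mathrm{A})$; this is established inductively using the Picard iterates defining $\mathcal{S}$ and the invariance of $\mathrm{D}(\mathrm{A})$ under $\mathrm{C}_0,\mathrm{S}_0$. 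The corresponding identity in $s$ and the mixed-derivative property at $t=s$ follow analogously. Property (D3) then comes from an additional differentiation, invoking the hypothesis that $t\mapsto\mathrm{F}(t)x$ is continuously differentiable for each $x\in\mathrm{E}$.

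For the compactness statement, I would exploit the representation displayed above, extended by density of $\mathrm{E}$ to an identity of bounded operators on $\mathbb{X}$. The first term $\mathrm{S}_0(t-s)$ is compact by hypothesis. For the integral term, the map $\tau\mapsto\mathrm{S}_0(t-\tau)\mathrm{F}(\tau)\mathcal{S}(\tau,s)$ takes values in the compact operators (composition of a compact with a bounded operator) and is continuous in the uniform operator topology on $[s,t-\varepsilon]$: strong continuity of $\mathrm{S}_0$ upgrades to uniform continuity on compacta when composed with a compact operator, and $\tau\mapsto\mathrm{F}(\tau)\mathcal{S}(\tau,s)$ is strongly continuous by the regularity of $\mathrm{F}$ and the already-established continuity of $\mathcal{S}$. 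Riemann sums therefore converge to the integral in operator norm, exhibiting it as a uniform limit of finite sums of compact operators, hence compact. A standard cutoff argument at $\tau=t$ (using the uniform operator bound on the integrand) handles the endpoint.

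The main obstacle is the second-order regularity underlying (D2) and (D3): one must rigorously justify differentiating twice under the integral sign, which requires showing that $\mathcal{S}(t,s)$ preserves $\mathrm{D}(\mathrm{A})$ and $\mathrm{E}$ with uniform bounds, rather than merely acting as a bounded operator on $\mathbb{X}$. This is precisely where the smoothness assumption $\mathrm{F}\in\mathcal{L}(\mathrm{E};\mathbb{X})$ with continuously differentiable dependence on $t$ is indispensable, since it guarantees that the Picard iterates stay in the right scale of spaces and that term-by-term differentiation is legitimate.
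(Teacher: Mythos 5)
The paper itself offers no proof of this statement: it is imported verbatim as Theorem 1.2 of \cite{HR}, with the underlying construction of $\mathcal{S}(t,s)$ via \eqref{2.10}--\eqref{2.11} taken from Serizawa--Watanabe \cite{SH}. So there is no internal argument to compare yours against, and your proposal must stand on its own. Your overall route --- verifying (D1)--(D3) directly from the Volterra equation and extracting compactness from the same representation --- is the standard one and is consistent with the setup the paper provides; the (D1) computations are correct.

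Two steps, however, contain genuine gaps. In the compactness argument you justify norm-continuity of $\tau\mapsto\mathrm{S}_0(t-\tau)\mathrm{F}(\tau)\mathcal{S}(\tau,s)$ by the principle that composition with a compact operator upgrades strong continuity to uniform continuity; but that principle requires the compact factor to sit to the \emph{right} of the strongly convergent one ($T_nK\to 0$ in norm when $T_n\to 0$ strongly and $K$ is compact), whereas in your integrand the compact factor $\mathrm{S}_0(t-\tau)$ sits on the left and the merely strongly continuous factor $\mathrm{F}(\tau)\mathcal{S}(\tau,s)$ on the right --- in that order the conclusion is false in general (take $K$ a rank-one projection and $B_n x=\langle x,e_n\rangle e_1$). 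The step can be repaired: $\mathrm{S}_0$ is automatically Lipschitz in operator norm because $\mathrm{S}_0(t)-\mathrm{S}_0(t')=\int_{t'}^{t}\mathrm{C}_0(\sigma)\,\mathrm{d}\sigma$, so no compactness is needed for that factor, and the cleanest route is to drop the Riemann-sum argument entirely and instead compose the bounded map $w\mapsto\mathrm{F}(\cdot)\mathcal{S}(\cdot,s)w$ with the compact operator $f\mapsto\int_s^{t}\mathrm{S}_0(t-\tau)f(\tau)\,\mathrm{d}\tau$ (compact from $\mathrm{L}^p$ into $\mathbb{X}$ when $\mathrm{S}_0$ is compact --- precisely the Travis--Webb/Li--Yong lemma the paper invokes later for $\mathcal{Q}$); but as written the step fails. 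Second, in (D2) you apply $\mathrm{A}\mathrm{S}_0(t-\tau)$ to $\mathrm{F}(\tau)\mathcal{S}(\tau,s)w$, which lies only in $\mathbb{X}$ and not in $\mathrm{E}$, so the term-by-term differentiation is not legitimate pointwise. The correct mechanism is the cosine-family regularity theorem asserting that $\int_s^t\mathrm{S}_0(t-\tau)g(\tau)\,\mathrm{d}\tau\in\mathrm{D}(\mathrm{A})$ with $\mathrm{A}\int_s^t\mathrm{S}_0(t-\tau)g(\tau)\,\mathrm{d}\tau=\partial_t^2\int_s^t\mathrm{S}_0(t-\tau)g(\tau)\,\mathrm{d}\tau-g(t)$ for continuously differentiable $g$, which in turn requires first proving that $\tau\mapsto\mathcal{S}(\tau,s)w$ is $C^1$ with values in $\mathrm{E}$ --- a bootstrap in the $\|\cdot\|_{1}$-norm, not merely invariance of $\mathrm{D}(\mathrm{A})$ under the Picard iterates. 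You correctly flag this as the main obstacle, but the proposal does not supply the argument that overcomes it.
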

	
	\subsection{Resolvent operator and assumptions }To study the approximate controllability of the system \eqref{1.1}-\eqref{1.5}, we define the following operators:
	\begin{equation}\label{ROP}
	\left\{
	\begin{aligned}
	\mathrm{L}_{T}u &:= \int_{0}^{T}\mathcal{S}(T, t) \mathrm{B}u(t) \mathrm{d}t,  \\ 
	\Psi_{0}^{{T}} &:= \int_{0}^{T}\mathcal{S}(T, t) \mathrm{B}\mathrm{B}^{*}\mathcal{S}(T, t)^{*} \mathrm{d}t= \mathrm{L}_{T}(\mathrm{L}_{T}) ^{*},\\ 
	\mathrm{R}(\lambda,\Psi_{0}^{{T}})&:=(\lambda \mathrm{I}+\Psi_{0}^{{T}}\mathcal{J})^{-1}, \ \lambda>0,\ 
	\end{aligned}
	\right.
	\end{equation} 
	where $\mathrm{B}^{*} $ and $\mathcal{S}(T, t)^{*}$ denote the adjoint operators of $\mathrm{B} $ and $\mathcal{S}(T, t) $ respectively.  Moreover, the map $\mathcal{J}$ stands for the duality mapping.
	\begin{Def}\label{def2.3}
		The duality mapping $\mathcal{J}:\mathbb{X}  \rightarrow 2^{\mathbb{X}^*}$ is defined as $$\mathcal{J}=\{x^*\in \mathbb{X}^*:\langle x, x^*\rangle=\norm{x}_{\mathbb{X}}^{2}=\norm{x^*}_{\mathbb{X}^*}^{2}\}, \ \text{ for all } \ x\in \mathbb{X},$$
		where  $\langle \cdot, \cdot  \rangle $  represents  the duality pairing between $\mathbb{X}$ and $\mathbb{X}^*$.  
	\end{Def}
	\begin{rem}\label{lem2.1}
		\begin{itemize}
			\item[(i)] If the space $\mathbb{X}$ is reflexive, then the space $\mathbb{X}$ and $\mathbb{X}^*$ become strictly convex (cf. \cite{VB}). Moreover, the strict convexity of $\mathbb{X}^*$ guarantees that the mapping $\mathcal{J}$ is bijective, strictly monotonic and demicontinuous, that is,
			$$x_{k} \rightarrow x \ \text{ in }	\ {\mathbb{X}} \Rightarrow\mathcal{J}[x_{k}]	\xrightharpoonup{{w}}\mathcal{J}[x] \ \text{ in }\ \mathbb{X}^* \text{ as } k \rightarrow \infty.$$
			\item [(ii)]If $\mathbb{X}$ is a Hilbert space (identified with its own dual), then $\mathcal{J}=\mathrm{I}$, the identity operator on $\mathbb{X}$.
		\end{itemize}
	\end{rem}
	
	Let us define the set 
	\begin{align*} 
	\mathrm{PC}(J;\mathbb{X})&:=\{x:J \rightarrow \mathbb{X} : x\vert_{t\in I_i}\in\mathrm{C}(I_i;\mathbb{X}),\ I_i:=(t_i, t_{i+1}],\ i=0,1,\ldots,N, \ x(t_i^+) \mbox{ and }\ x(t_i^-)\ \\& \qquad \mbox{ exist for each }\ i=1,\ldots,N, \ \mbox{ and satisfy }\ x(t_i)=x(t_i^-)\}, 
	\end{align*}
	 with the norm $\left\|x\right\|_{\mathrm{PC}(J;\mathbb{X})}:=\sup\limits_{t\in J}\left\|x(t)\right\|_{\mathbb{X}}$. Moreover, we also define the set 
	\begin{align}\label{29}
	\mathrm{PC}(J_q;\mathbb{X})&:=\left\{x: J_q\rightarrow \mathbb{X} : x\vert_{t\in [-q,0)}\in\mathcal{D}\ \mbox{and}\ x\vert_{t\in J}\in\mathrm{PC}(J;\mathbb{X})\right\}, 
	\end{align}
	equipped with the norm $\left\|x\right\|_{\mathrm{PC}([-q,T];\mathbb{X})}:=\frac{1}{q}\int_{-q}^{0}\norm{x(s)}_{\mathbb{X}}\mathrm{d}s+\sup\limits_{t\in J}\left\|x(t)\right\|_{\mathbb{X}}$, where
	\begin{align}\label{1.2}
	\mathcal{D}:&= \{\phi : [-q, 0] \rightarrow\mathbb{X}:\phi \ \text{is piecewise continuous with jump discontinuity} \},\end{align}
	endowed with the norm $\norm{\phi}_\mathcal{D}=\frac{1}{q}\int_{-q}^{0}\norm{\phi(s)}_{\mathbb{X}}\mathrm{d}s$ (see\cite{AG}).

	In order to determine the existence and approximate controllability results for the system \eqref{1.1}-\eqref{1.5}, we impose the following assumptions:
	\begin{Ass}\label{as}
		\begin{itemize}
			\item[\textit{(H0)}] for every $y\in\mathbb{X}$, $$z_{\lambda}(y)=\lambda(\lambda I+\Psi_{0}^{\mathrm{T}}(\mathcal{J}))^{-1}(y)\rightarrow 0\ \text{ as }\ \lambda\downarrow 0$$ in strong topology, where $z_{\lambda}(y)$ is a solution of the equation
			\begin{equation}\label{2.15}
			\lambda z_{\lambda}(y)+\Psi_{0}^{\mathrm{T}} \mathcal{J}[z_{\lambda}(y)]=\lambda y.
			\end{equation}
			\item[\textit{(H1)}]$ \mathrm{S}_{0}(t),t\in J$ is compact.
			\item[\textit{(H2)}]
			\begin{enumerate}
				\item[(i)]Let $x:J_q \rightarrow \mathbb{X}$ be such that $x_{0}=\phi$ and $x|_{J}\in \mathrm{PC}(J;\mathbb{X})$. The function 
				$ f:J_1\times\mathcal{D}\to\mathbb{X}$, where $J_1=\bigcup_{i=0}^{N}[s_i,t_{i+1}]$ is strongly measurable in $t$, for each $\phi\in\mathcal{D}$ and continuous in $\phi$, for a.e. $t\in J_1$.
				\item[(ii)] There exists a function $\gamma\in \mathrm{L}^{1}(J_1;\mathbb{R}^{+})$, such that
				$$\|f(t, \phi)\|_{\mathbb{X}}\le\gamma(t), \text{ for a.e.}\ t\in J_1 \text{ and  for all}\ \phi\in \mathcal{D}.$$
			\end{enumerate}
			\item[\textit{(H3)}] The non-instantaneous impulses $\rho_{i}, :[t_{i}, s_{i}]\times\mathbb{X}\rightarrow\mathbb{X}$, for $i=1,\cdots,N$, are such that 
			\begin{itemize}
				\item [(i)] the impulses $\rho_i(\cdot,x), :[t_i,s_i]\to\mathbb{X}$ are continuously differentiable for each $x\in\mathbb{X}$,
				\item [(ii)] for all  $t\in[t_i,s_i]$, the impulses $\rho_i(t,\cdot):\mathbb{X}\to\mathbb{X}$ are completely continuous and their derivatives $\rho'_i(t,\cdot):\mathbb{X}\to\mathbb{X}$ are continuous,
				\item [(iii)] $\norm{\rho_{i}(t,x)}_{\mathbb{X}}\le d_{i},\ \norm{\rho'_{i}(t,x)}_{\mathbb{X}}\le e_{i}, \text{ for all } t\in [t_{i},s_{i}],\ x\in\mathbb{X},\ i=1,\cdots,N$, where $d_{i}$'s and $e_{i}$'s are positive constants.
			\end{itemize}
			\item[\textit{(H4)}] The function $ g,h :\mathrm{PC}(J_q;\mathbb{X})\to \mathbb{X}$ are such that 
			\begin{enumerate}
				\item [(i)] $g$ is completely continuous and $h$ is continuous,
				\item [(ii)] for all $x\in\mathrm{PC}(J_q;\mathbb{X})$, there exist a constants $ M_{g}, M_{h} $ such that $$ \|g(x)\|_{\mathbb{X}}\leq M_{g}\left(\|x\|_{\mathrm{PC}(J_q;\mathbb{X})}+1\right),\ \ \|h(x)\|_{\mathbb{X}}\leq M_{h}\left(\|x\|_{\mathrm{PC}(J_q;\mathbb{X})}+1\right).$$  
			\end{enumerate}
		\end{itemize}
	\end{Ass}
	\begin{rem}\label{rem2.3} Note that the equation \eqref{2.15} has a unique solution $z_{\lambda}(y) = \lambda(\lambda I +\Psi^{T}_{0}\mathcal{J})^{-1}(y)=\lambda\mathrm{R}(\lambda,\Psi^{T}_{0})(y)$ for every $ y\in \mathbb{X}$ and $\lambda > 0$, followed by {Lemma 2.2 \cite{MN}}. Moreover 
		\begin{align}\label{2.16}\norm{z_{\lambda}(y)}_{\mathbb{X}} &= \norm{\mathcal{J}[z_{\lambda}(y)]}_{\mathbb{X}'} \le \norm{y}_{\mathbb{X}}.\end{align}
	\end{rem}

	We now introduce the concept of mild solution for the system \eqref{1.1}-\eqref{1.5} (cf. \cite{Er}). 
	\begin{Def}\label{def2.2}
		A function $x(\cdot ; \phi, \eta, u): J_q \rightarrow \mathbb{X}$ is called a \emph{mild solution} of  \eqref{1.1}-\eqref{1.5}, if $x(t)=\phi(t),\ t\in[-q,0)$ and satisfies the following:
		\begin{eqnarray}\label{DEF2.10}
		x(t)=\left\{
		\begin{aligned}
		& \mathcal{C}(t, 0)[\phi(0)+g(x)] + \mathcal{S}(t,0)[\eta+h(x)] +\int_{0}^{t}\mathcal{S}(t, s)[\mathrm{B}u(s)+f(s,x_{s})] \mathrm{d}s, \  t \in  [0,t_{1} ], \\
		&\rho_{i}(t, x(t_{i}^{-})) ,\ t\in(t_i,s_i],\ i=1,\dots,N,\\
		& \mathcal{C}(t, s_{i})\rho_{i}(s_{i}, x(t_{i}^{-})) + \mathcal{S}(t, s_{i})\rho'_i(s_{i}, x(t_{i}^{-})) \\ &\quad +\int_{s_{i}}^{t}\mathcal{S}(t, s)[\mathrm{B}u(s)+f(s,x_{s})] \mathrm{d}s,\ t\in(s_i,t_{i+1}],\ i=1,\dots,N.
		\end{aligned}
		\right.
		\end{eqnarray} 
	\end{Def}
	\begin{rem}
		Note that a mild solution of the system \eqref{1.1}-\eqref{1.5} satisfies the conditions \eqref{1}, \eqref{1.4} and\eqref{15}. Nevertheless, a mild solution  need not be differentiable at $t\in\cup_{i=1}^{N} (t_{i},s_{i} ]\cup \{0\}$. 
	\end{rem}
	\begin{Def}
		 The system \eqref{1.1}-\eqref{1.5} is said to be approximately controllable on $J$, for any initial function $\phi \in \mathcal{D}$, if the closure of the reachable set is the whole space $\mathbb{X}$, that is, $\overline{\mathcal{K}(T,\phi,\eta)}=\mathbb{X}$, where the reachable set is defined as $$\mathcal{K}(T,\phi,\eta)=\{x(T ; \phi, \eta, u): u(\cdot) \in\mathrm{L}^{2}(J;\mathbb{U})\}.$$
	\end{Def}
	
	\section{Approximate Controllability of the Semilinear Non-Autonomous System}\label{SNAS}\setcounter{equation}{0}
	This section is devoted for investigating the approximate controllability of the system \eqref{1.1}-\eqref{1.5}. A set of sufficient conditions will be obtained by studying the approximate controllability of the linear control problem corresponding to the system \eqref{1.1}-\eqref{1.5}.   
	\subsection{Linear control problem}\label{LCP} 
	To establish the approximate controllability of the linear problem, we first determine the existence of an optimal control by minimizing the cost functional given by
	\begin{align}\label{2.17}
	\mathcal{F}(x,u)&=\norm{x(T)-x_T}^2_{\mathbb{X}}+\lambda\int_{0}^{T}\norm{u(t)}^2_{\mathbb{U}},
	\end{align}
	where $x(\cdot)$ is the unique mild solution of the linear control system:
	\begin{equation}\label{L1}
	\left\{
	\begin{aligned}
	{x}''(t)&= \mathrm{A}(t)x(t)+\mathrm{B}u(t),\ t\in  J =[0,T],\\
	x(0)&=v,\ x'(0) =w,
	\end{aligned}
	\right.
	\end{equation} 
	with the control $u\in\mathrm{L}^{2}(J;\mathbb{U}),\ x_T\in\mathbb{X}\text{ and } \lambda>0$. Since $\mathrm{B}u \in \mathrm{L}^1(J;\mathbb{X})$, the existence of a unique mild solution
	\begin{align}
	x(t)&=\mathcal{C}(t,0)v+\mathcal{S}(t,0)w+\int_{0}^{t}\mathcal{S}(t,s)\mathrm{B}u(s)\mathrm{d}s,\ t\in J,
	\end{align}
	for any $u \in\mathrm{L}^{2}(J;\mathbb{U})=\mathscr{U}_{ad}$, 
	to the system \eqref{L1} is immediate by an application of Theorem 2.2, \cite{HR}. Next, we define the {\em admissible class} $\mathscr{A}_{ad}$ for the system \eqref{L1} as $$\mathscr{A}_{ad}=\{(x,u):x \text{ is a unique mild solution of } \eqref{L1} \text{ with control  }u\in \mathscr{U}_{ad}\}.$$ For a given control $u\in \mathscr{U}_{ad}$, the system \eqref{L1} possesses a unique mild solution, and hence the set $\mathscr{A}_{ad}$ is nonempty.
	
	The existence of an optimal pair minimizing the cost functional \eqref{2.17} is discussed in the next theorem:
	\begin{thm}\label{th1} For given $v,w\in\mathbb{X}$, there exists a unique optimal pair  $(x^0,u^0)\in\mathscr{A}_{\mathrm{ad}}$ of the problem:
		\begin{align}\label{F}
		\min\limits_{(x,u)\in \mathscr{A}_{ad}}\mathcal{F}(x, u).
		\end{align}
	\end{thm}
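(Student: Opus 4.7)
My plan is to use the direct method of the calculus of variations, exploiting the fact that the state depends affinely on the control through a bounded linear operator. Since $\mathcal{F}(x,u)\ge 0$ on $\mathscr{A}_{ad}$ and the pair $(x,0)\in\mathscr{A}_{ad}$ (with $x(t)=\mathcal{C}(t,0)v+\mathcal{S}(t,0)w$) gives a finite value, the infimum $m:=\inf_{(x,u)\in\mathscr{A}_{ad}}\mathcal{F}(x,u)$ is finite. Choose a minimizing sequence $(x^n,u^n)\in\mathscr{A}_{ad}$ with $\mathcal{F}(x^n,u^n)\to m$. From $\lambda\|u^n\|_{\mathrm{L}^2(J;\mathbb{U})}^2\le\mathcal{F}(x^n,u^n)\le m+1$ we see $\{u^n\}$ is bounded in the Hilbert space $\mathrm{L}^2(J;\mathbb{U})$. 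Reflexivity then yields a subsequence (not relabelled) with $u^n\rightharpoonup u^0$ weakly in $\mathrm{L}^2(J;\mathbb{U})$, and $u^0\in\mathscr{U}_{ad}$.

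Next I would identify the weak limit of the states. Define the bounded linear operator $\Theta:\mathrm{L}^2(J;\mathbb{U})\to\mathbb{X}$ by $\Theta(u)=\int_0^T\mathcal{S}(T,s)\mathrm{B}u(s)\,\mathrm{d}s$; boundedness follows from $\|\mathcal{S}(T,s)\|_{\mathcal{L}(\mathbb{X})}\le\tilde M$, $\|\mathrm{B}\|\le M_B$, and the Cauchy--Schwarz inequality. Being linear and bounded, $\Theta$ is weakly continuous, so $\Theta(u^n)\rightharpoonup\Theta(u^0)$ in $\mathbb{X}$. Let $x^0$ be the unique mild solution of \eqref{L1} corresponding to $u^0$; then $(x^0,u^0)\in\mathscr{A}_{ad}$ and
\begin{equation*}
x^n(T)-x^0(T)=\Theta(u^n)-\Theta(u^0)\rightharpoonup 0\quad\text{in }\mathbb{X}.
\end{equation*}
Since the norms $\|\cdot\|_{\mathbb{X}}$ and $\|\cdot\|_{\mathrm{L}^2(J;\mathbb{U})}$ are weakly lower semicontinuous, the functional $\mathcal{F}$ is weakly lower semicontinuous, so
\begin{equation*}
\mathcal{F}(x^0,u^0)\le\liminf_{n\to\infty}\mathcal{F}(x^n,u^n)=m,
\end{equation*}
and by definition of $m$ we conclude $\mathcal{F}(x^0,u^0)=m$, proving existence of a minimizer.

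For uniqueness, suppose $(x^1,u^1)$ and $(x^2,u^2)$ are two optimal pairs with $u^1\neq u^2$. Set $\bar u=(u^1+u^2)/2$ and let $\bar x$ be the corresponding mild solution; by linearity of the state equation $\bar x=(x^1+x^2)/2$, so $(\bar x,\bar u)\in\mathscr{A}_{ad}$. The map $u\mapsto\|x(T;u)-x_T\|_{\mathbb{X}}^2$ is convex (norm squared composed with an affine map), while $u\mapsto\lambda\|u\|_{\mathrm{L}^2(J;\mathbb{U})}^2$ is strictly convex since $\lambda>0$ and $\mathbb{U}$ is a Hilbert space. The parallelogram identity then gives
\begin{equation*}
\mathcal{F}(\bar x,\bar u)<\tfrac12\mathcal{F}(x^1,u^1)+\tfrac12\mathcal{F}(x^2,u^2)=m,
\end{equation*}
contradicting the definition of $m$. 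Hence $u^1=u^2$ and the corresponding mild solutions coincide.

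The routine technical points are the boundedness of $\Theta$ (immediate from \eqref{24}) and the weak lower semicontinuity of the Hilbert space norms; the main conceptual step is the passage to the weak limit in the state equation, which succeeds precisely because the control enters linearly through the bounded operator $\Theta$. Strict positivity of $\lambda$ is essential for uniqueness, since otherwise only the terminal norm would remain and distinct controls could give the same terminal state.
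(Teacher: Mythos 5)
Your proof is correct and follows essentially the same route the paper intends: the paper defers the existence argument to the direct method of Theorem 3.1 in \cite{SS} (minimizing sequence, weak compactness of the bounded control sequence in $\mathrm{L}^2(J;\mathbb{U})$, weak lower semicontinuity of the cost) and obtains uniqueness from the strict convexity of $\mathcal{F}$ in $u$ together with the linearity of the state equation, exactly as you do.
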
 
	A proof of Theorem \ref{th1} can be obtained by proceeding similarly as in the proof of Theorem 3.1 in \cite{SS}. As the cost functional is convex and the system \eqref{L1} is linear, the optimal control obtained in Theorem \ref{th1} is unique. The  expression for the optimal control in the feedback form is provided by the following lemma:
	\begin{lem}\label{lm2.1}
		Assume that $(x,u)$ is the optimal pair for the problem \eqref{F}. Then the optimal control $u$ is given by
		\begin{align}
		u(t)=\mathrm{B}^*\mathcal{S}^*(T,t)\mathcal{J}\left[\mathrm{R}(\lambda,\Psi_{0}^{\mathrm{T}})\ell(x(\cdot))\right],\ t\in J,
		\end{align}
		where
		\begin{align}
		\nonumber	\ell(x(\cdot))=x_T-\mathcal{C}(T,0)v-\mathcal{S}(T,0)w.
		\end{align}
	\end{lem}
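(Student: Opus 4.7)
The plan is to derive the formula from the first-order necessary condition of the unconstrained minimization over $u\in\mathrm{L}^{2}(J;\mathbb{U})$, leveraging the duality mapping $\mathcal{J}$ to differentiate $\|\cdot\|_{\mathbb{X}}^{2}$. For the linear system \eqref{L1} the control-to-state map is affine with
\begin{equation*}
x_u(T)-x_T=\mathrm{L}_T u-\ell(x(\cdot)),
\end{equation*}
so $\mathcal{F}$ reduces to a functional of $u$ alone. Since $\mathbb{X}^{*}$ is strictly convex, the duality mapping $\mathcal{J}$ is single-valued (see Remark~\ref{lem2.1}), which in turn makes $\|\cdot\|_{\mathbb{X}}^{2}$ Gateaux differentiable with derivative $2\mathcal{J}(\cdot)$. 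I would then compute the Gateaux derivative of $\mathcal{F}$ at $u$ in a direction $\tilde v\in\mathrm{L}^{2}(J;\mathbb{U})$: the state term contributes $2\langle \mathrm{L}_T\tilde v,\mathcal{J}[x(T)-x_T]\rangle$, while the control term contributes $2\lambda\langle u,\tilde v\rangle_{\mathrm{L}^{2}}$.

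Setting the sum to zero for every $\tilde v$ and using the adjoint identity $(\mathrm{L}_T^{*}y^{*})(t)=\mathrm{B}^{*}\mathcal{S}^{*}(T,t)y^{*}$, I would obtain
\begin{equation*}
u(t)=-\tfrac{1}{\lambda}\mathrm{B}^{*}\mathcal{S}^{*}(T,t)\mathcal{J}[x(T)-x_T]=\tfrac{1}{\lambda}\mathrm{B}^{*}\mathcal{S}^{*}(T,t)\mathcal{J}\bigl[\ell(x(\cdot))-\mathrm{L}_T u\bigr],
\end{equation*}
where the second equality uses the oddness $\mathcal{J}(-y)=-\mathcal{J}(y)$, an immediate consequence of single-valuedness of $\mathcal{J}$. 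Setting $z:=\ell(x(\cdot))-\mathrm{L}_T u$ and substituting the expression for $u$ back into $\mathrm{L}_T u$ yields $\mathrm{L}_T u=\tfrac{1}{\lambda}\Psi_{0}^{\mathrm{T}}\mathcal{J}(z)$, so that $z$ satisfies
\begin{equation*}
\lambda z+\Psi_{0}^{\mathrm{T}}\mathcal{J}(z)=\lambda\,\ell(x(\cdot)).
\end{equation*}
By Remark~\ref{rem2.3} this equation admits the unique solution $z=\lambda\,\mathrm{R}(\lambda,\Psi_{0}^{\mathrm{T}})\ell(x(\cdot))$, and the positive homogeneity $\mathcal{J}(\lambda y)=\lambda\mathcal{J}(y)$ then collapses the prefactor $1/\lambda$ to deliver the stated feedback form.

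The main obstacle, as opposed to the Hilbert-space situation where $\mathcal{J}=\mathrm{I}$ trivializes everything, is precisely the differentiation of the $\mathbb{X}$-norm squared via the (single-valued) duality mapping; the standing hypothesis that $\mathbb{X}$ be reflexive with strictly convex dual is what legitimizes it. Once that point is settled, the remainder is purely algebraic, namely the identification of $\ell(x(\cdot))-\mathrm{L}_T u$ with the solution of the nonlinear resolvent equation \eqref{2.15}.
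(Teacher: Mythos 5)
Your argument is correct and is essentially the same variational derivation that the paper relies on: the paper omits the proof and refers to Lemma 3.1 of \cite{RK}, which likewise obtains the first-order optimality condition $\lambda u=-\mathrm{L}_T^*\mathcal{J}[x(T)-x_T]$, identifies $z=\ell(x(\cdot))-\mathrm{L}_Tu$ as the solution of the resolvent equation \eqref{2.15} (unique by Remark \ref{rem2.3}), and uses the homogeneity of $\mathcal{J}$ to absorb the factor $1/\lambda$. Your explicit justification of the Gateaux differentiability of $\|\cdot\|_{\mathbb{X}}^2$ via single-valuedness of $\mathcal{J}$ (from strict convexity of $\mathbb{X}^*$) is exactly the point that distinguishes the Banach-space case from the Hilbert-space one, and it is handled correctly.
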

	A proof of the lemma can be determined by proceeding similarly as in the proof of Lemma 3.1, \cite{RK}.	In next lemma, we discuss the approximate controllability of the linear control problem \eqref{L1}.
	\begin{lem}\label{lm}
		The linear control system \eqref{L1} is approximately controllable on $J$ if and only if the Assumption  (H0) holds.
	\end{lem}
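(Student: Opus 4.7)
I would prove the equivalence in two directions, each pivoting on the identity $(\lambda\mathrm{I}+\Psi_0^T\mathcal{J})\mathrm{R}(\lambda,\Psi_0^T)y=y$, together with the factorization $\Psi_0^T=\mathrm{L}_T\mathrm{L}_T^*$ recorded in \eqref{ROP}. The main players are the reachable set $\mathcal{K}(T,v,w)=\{\mathcal{C}(T,0)v+\mathcal{S}(T,0)w+\mathrm{L}_Tu:u\in\mathrm{L}^2(J;\mathbb{U})\}$, which is dense in $\mathbb{X}$ precisely when the range of $\mathrm{L}_T$ is dense in $\mathbb{X}$.

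\textbf{Sufficiency ((H0) $\Rightarrow$ approximate controllability).} Given any target $x_T\in\mathbb{X}$ and $\lambda>0$, set $y=x_T-\mathcal{C}(T,0)v-\mathcal{S}(T,0)w$ and take the feedback control suggested by Lemma \ref{lm2.1},
\[
u_\lambda(t):=\mathrm{B}^*\mathcal{S}^*(T,t)\mathcal{J}\!\left[\mathrm{R}(\lambda,\Psi_0^T)y\right].
\]
Plugging $u_\lambda$ into the mild-solution formula and using $\mathrm{L}_T\mathrm{L}_T^*=\Psi_0^T$ yields $x_\lambda(T)=\mathcal{C}(T,0)v+\mathcal{S}(T,0)w+\Psi_0^T\mathcal{J}\!\left[\mathrm{R}(\lambda,\Psi_0^T)y\right]$. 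The resolvent identity $\Psi_0^T\mathcal{J}\!\left[\mathrm{R}(\lambda,\Psi_0^T)y\right]=y-\lambda\mathrm{R}(\lambda,\Psi_0^T)y$ then gives
\[
x_\lambda(T)-x_T=-\lambda\mathrm{R}(\lambda,\Psi_0^T)y=-z_\lambda(y),
\]
which tends to $0$ strongly by (H0). Hence $x_T$ lies in $\overline{\mathcal{K}(T,v,w)}$, establishing approximate controllability.

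\textbf{Necessity (approximate controllability $\Rightarrow$ (H0)).} Fix $y\in\mathbb{X}$ and apply $\mathcal{J}[z_\lambda(y)]\in\mathbb{X}^*$ to the defining equation \eqref{2.15}. Using $\langle\Psi_0^T\mathcal{J}[z_\lambda(y)],\mathcal{J}[z_\lambda(y)]\rangle=\|\mathrm{L}_T^*\mathcal{J}[z_\lambda(y)]\|_{\mathbb{U}}^2$ and the definition of $\mathcal{J}$, I obtain
\[
\lambda\|z_\lambda(y)\|_{\mathbb{X}}^2+\|\mathrm{L}_T^*\mathcal{J}[z_\lambda(y)]\|_{\mathbb{U}}^2=\lambda\langle y,\mathcal{J}[z_\lambda(y)]\rangle\leq\lambda\|y\|_{\mathbb{X}}\|z_\lambda(y)\|_{\mathbb{X}}\leq\lambda\|y\|_{\mathbb{X}}^2,
\]
where the last bound uses \eqref{2.16}. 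Therefore $\|\mathrm{L}_T^*\mathcal{J}[z_\lambda(y)]\|_{\mathbb{U}}\to 0$ as $\lambda\downarrow 0$. Since $\{\mathcal{J}[z_\lambda(y)]\}$ is bounded in the reflexive dual $\mathbb{X}^*$, from any sequence $\lambda_k\downarrow 0$ I extract a subsequence (not relabeled) with $\mathcal{J}[z_{\lambda_k}(y)]\rightharpoonup\xi$ in $\mathbb{X}^*$; passing to the weak limit gives $\mathrm{L}_T^*\xi=0$. By the Hahn--Banach characterization, approximate controllability forces $\overline{\mathrm{Range}(\mathrm{L}_T)}=\mathbb{X}$, hence $\ker(\mathrm{L}_T^*)=\{0\}$ and $\xi=0$. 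Finally, the inequality $\|z_{\lambda_k}(y)\|_{\mathbb{X}}^2\leq\langle y,\mathcal{J}[z_{\lambda_k}(y)]\rangle\to 0$ yields strong convergence $z_{\lambda_k}(y)\to 0$; a standard subsequence-of-every-subsequence argument promotes this to the full net, proving (H0).

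\textbf{Main obstacle.} The only subtle point is the necessity direction: the duality mapping $\mathcal{J}$ is in general nonlinear and only demicontinuous, so one cannot directly identify the weak limit of $\mathcal{J}[z_\lambda(y)]$ with $\mathcal{J}$ of a weak limit. The plan bypasses this by extracting weak limits of $\mathcal{J}[z_\lambda(y)]$ itself in $\mathbb{X}^*$ and exploiting the self-testing inequality $\|z_\lambda(y)\|^2\le\langle y,\mathcal{J}[z_\lambda(y)]\rangle$ to convert weak decay of $\mathcal{J}[z_\lambda(y)]$ into strong decay of $z_\lambda(y)$; reflexivity of $\mathbb{X}$ and strict convexity of $\mathbb{X}^*$ (Remark \ref{lem2.1}) are exactly what makes this maneuver legitimate, which also explains why the framework of separable reflexive Banach spaces with strictly convex dual is imposed throughout.
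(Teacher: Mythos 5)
Your proof is correct, but it is worth noting that the paper does not actually supply a proof of Lemma \ref{lm}: the statement is left unproved and the equivalence is instead delegated, via Remark \ref{rm}, to Theorem 2.3 of \cite{MN}, where the argument is routed through the intermediate characterization ``(H0) $\Leftrightarrow$ $\Psi_0^T$ is positive $\Leftrightarrow$ $\ker \mathrm{L}_T^*=\{0\}$ $\Leftrightarrow$ $\overline{\mathrm{Range}(\mathrm{L}_T)}=\mathbb{X}$.'' Your argument is the direct, self-contained version of that chain: the sufficiency half is exactly the resolvent computation $x_\lambda(T)-x_T=-z_\lambda(y)$ that the paper later reuses in \eqref{3.43} for the semilinear system, and the necessity half reproves the relevant portion of Mahmudov's theorem by pairing \eqref{2.15} with $\mathcal{J}[z_\lambda(y)]$ to get the energy identity $\lambda\|z_\lambda(y)\|^2+\|\mathrm{L}_T^*\mathcal{J}[z_\lambda(y)]\|^2=\lambda\langle y,\mathcal{J}[z_\lambda(y)]\rangle$, then using $\ker\mathrm{L}_T^*=\{0\}$ to kill the weak limit. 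You correctly identify and handle the one genuinely delicate point in the Banach-space setting, namely that $\mathcal{J}$ is nonlinear and only demicontinuous, so one must extract weak limits of $\mathcal{J}[z_\lambda(y)]$ in $\mathbb{X}^*$ directly rather than of $z_\lambda(y)$; the bound $\|z_\lambda(y)\|^2\le\langle y,\mathcal{J}[z_\lambda(y)]\rangle$ then upgrades the weak vanishing to strong vanishing. The only advantage of the paper's citation-based route is brevity; your version has the advantage of making explicit exactly where reflexivity, separability, and strict convexity of $\mathbb{X}^*$ enter, which the paper leaves implicit.
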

	\begin{rem}\label{rm}
		From Theorem 2.3 in \cite{MN}, we know that the Assumption (H0) holds implies that the operator $\Psi_0^T$ is positive and vice versa. The positivity of $\Psi_0^T$ is equivalent to say that $$\langle x^*,\Psi_0^Tx^*\rangle=0\Rightarrow x^*=0.$$ Since, we have $$\langle x^*,\Psi_0^Tx^*\rangle=(\mathrm{L}_T^*x^*,\mathrm{L}_T^*x^*)_{\mathrm{L}^2(J;\mathbb{U})}=\norm{\mathrm{L}_T^*x^*}^2_{\mathrm{L}^2(J;\mathbb{U})},$$
		where  $\mathrm{L}_T^*x^*=\mathrm{B}^*\mathcal{S}(T,t)^*x^*,\ t\in J$. Hence by the above fact and Lemma \ref{lm}, one can  ensure that the approximate controllability of the linear system \eqref{L1} is equivalent to the condition $$\mathrm{B}^*\mathcal{S}(T,t)^*x^*=0\ , t\in J, \ \Rightarrow x^*=0.$$ 
	\end{rem}
	\subsection{Approximate controllability of the semilinear system} In this subsection, we establish the approximate controllability of the system \eqref{1.1}-\eqref{1.5}. To achieve this goal, we prove the existence of a mild solution for $\lambda> 0,\ x_T \in \mathbb{X}$ of  the  system \eqref{1.1}-\eqref{1.5} with the control 
	\begin{align}\label{4.1}
	u_{\lambda}(t)=&\sum\limits_{i=0}^{N}u_{i,\lambda}(t)\chi_{[s_i,t_{i+1}]}(t),\  t\in J,
	\end{align}
	where \begin{align*}
	u_{i,\lambda}(t)=\mathrm{B}^*\mathcal{S}^*(t_{i+1},t)\mathcal{J}\left[\mathrm{R}(\lambda,\Psi_{s_i}^{t_{i+1}})g_i(x(\cdot))\right],
	\end{align*}
	for $\ t\in[s_i,t_{i+1}],\ i=0,1,\dots,N$ with
	\begin{align*}
	g_0(x(\cdot))&=x_{T}-\mathcal{C}(t_1, 0)[\phi(0)+g(\tilde{x})] - \mathcal{S}(t_1,0)[\eta+h(\tilde{x})] -\int_{0}^{t_1}\mathcal{S}(t_1, s)f(s,\tilde{x}_{s})\mathrm{d}s,\\
	g_i(x(\cdot))&=x_{T}-\mathcal{C}(t_{i+1}, s_i)\rho_{i}(s_{i}, \tilde{x}(t_{i}^{-}))  - \mathcal{S}(t_{i+1},s_i)\rho'_i(s_{i}, \tilde{x}(t_{i}^{-})) \\&\quad -\int_{s_i}^{t_{i+1}}\mathcal{S}(t_{i+1}, s)f(s,\tilde{x}_{s})\mathrm{d}s, \ i=1,\ldots,N,
	\end{align*}
	and $\tilde{x}: J_q \rightarrow \mathbb{X}$ such that $\tilde{x}(t) = \phi(t),\ t \in [-q,0)$ and $\tilde{x}(t) = x(t),\ t \in J$.
	\begin{rem}
		Since the operator 	$\Psi_{s_i}^{t_{i+1}},$ for each $i = 0, 1,\dots,N$ is non-negative, linear and bounded,  Remark \ref{rem2.3} is also valid for each $\Psi_{s_i}^{t_{i+1}},$ for $i = 0, 1,\dots, N$.
	\end{rem}
	\begin{thm}\label{th3.1}
		If Assumptions (H1)-(H4) hold true, then for every $\lambda > 0$ and fixed $x_T \in \mathbb{X},$ the system \eqref{1.1}-\eqref{1.5} with the control \eqref{4.1} has at least one mild solution on $J,$ provided 
		\begin{align}\label{4.18}
		K\left[1+\frac{(\tilde{M}M_B)^2T}{\lambda}\right]<1,
		\end{align}
		where $K=MM_g+\tilde{M}M_h$.
	\end{thm}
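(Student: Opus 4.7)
The natural approach is Schauder's fixed point theorem applied to the solution operator associated with the feedback control $u_\lambda$. Define the operator $\Phi_\lambda:\mathrm{PC}(J_q;\mathbb{X})\to \mathrm{PC}(J_q;\mathbb{X})$ by prescribing $(\Phi_\lambda x)(t)=\phi(t)$ on $[-q,0)$ and, on $J$, using the piecewise formula in Definition \ref{def2.2} with $u$ replaced by $u_\lambda$ from \eqref{4.1}. Fixed points of $\Phi_\lambda$ correspond exactly to mild solutions of the controlled system on $J_q$. I would then verify the three Schauder hypotheses: self-mapping of a suitable closed ball, continuity, and relative compactness of the image.

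For the self-mapping step, I would consider $B_r=\{x\in\mathrm{PC}(J_q;\mathbb{X}):\|x\|_{\mathrm{PC}(J_q;\mathbb{X})}\le r\}$ and estimate $\|(\Phi_\lambda x)(t)\|_{\mathbb{X}}$ on each of the pieces $[0,t_1]$, $(t_i,s_i]$, and $(s_i,t_{i+1}]$, using \eqref{23}, \eqref{24}, the bounds in (H2)(ii), (H3)(iii), (H4)(ii) for $f,\rho_i,\rho_i',g,h$, and the bound $M_B$ on $\mathrm{B}$. The key point is that $u_\lambda$ is controlled by $\|g_i(x(\cdot))\|_{\mathbb{X}}$ via Remark \ref{rem2.3}: from $\|z_\lambda(y)\|_{\mathbb{X}}\le\|y\|_{\mathbb{X}}$ one obtains $\|\mathrm{R}(\lambda,\Psi_{s_i}^{t_{i+1}})g_i(x)\|_{\mathbb{X}}\le\lambda^{-1}\|g_i(x)\|_{\mathbb{X}}$, and hence the integral $\int \mathcal{S}(t,s)\mathrm{B}u_\lambda(s)\mathrm{d}s$ contributes a factor $(\tilde{M}M_B)^2T/\lambda$ times $\|g_i(x)\|$. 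The only $x$-dependence in the $g_i$'s that is not bounded uniformly is through $g(\tilde{x})$ and $h(\tilde{x})$ in $g_0$, which produces a term of the form $(MM_g+\tilde{M}M_h)(r+1)(1+(\tilde{M}M_B)^2T/\lambda)$. Condition \eqref{4.18}, which reads $K(1+(\tilde{M}M_B)^2T/\lambda)<1$, lets me absorb the coefficient of $r$ and pick $r$ large so that $\Phi_\lambda(B_r)\subset B_r$.

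For continuity of $\Phi_\lambda$, I would take $x^n\to x$ in $\mathrm{PC}(J_q;\mathbb{X})$ and use continuity of $g$, $h$, $\rho_i$, $\rho_i'$ from (H3)(i)(ii), (H4)(i), continuity of $f(t,\cdot)$ from (H2)(i), and dominated convergence with majorant $\gamma$ to pass to the limit under the integral. The control $u_\lambda$ converges pointwise via the demicontinuity of $\mathcal{J}$ recorded in Remark \ref{lem2.1}(i), since $g_i(x^n)\to g_i(x)$ strongly and $\mathrm{R}(\lambda,\Psi_{s_i}^{t_{i+1}})$ is bounded; then strong continuity of $\mathcal{S}^*(t_{i+1},\cdot)$ and dominated convergence give convergence of $\mathrm{B}u_\lambda^n(s)$ in $\mathbb{X}$ for a.e.\ $s$.

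The hard part is the relative compactness of $\Phi_\lambda(B_r)$, which must be checked separately on each subinterval and combined via the standard piecewise Arzel\`a--Ascoli criterion for $\mathrm{PC}$. Pointwise relative compactness at a fixed $t$ follows from three ingredients: complete continuity of $g$ (hence of $\mathcal{C}(t,0)g(x)$) and of $\rho_i(t,\cdot)$ (hence of $\mathcal{C}(t,s_i)\rho_i(s_i,x(t_i^-))$); compactness of $\mathcal{S}(t,s)$ for $t>s$ coming from Theorem \ref{th2.1} together with (H1); and, to handle the integral term $\int_{s_i}^{t}\mathcal{S}(t,s)[\mathrm{B}u_\lambda(s)+f(s,x_s)]\mathrm{d}s$, the classical $\varepsilon$-truncation $\int_{s_i}^{t-\varepsilon}+\int_{t-\varepsilon}^{t}$, where the second piece is made small uniformly in $x\in B_r$ by the uniform $\mathrm{L}^1$ majorant $\gamma$ and the uniform bound on $u_\lambda$, while the first piece is compact because $\mathcal{S}(t,s)$ is compact for $t-s\ge\varepsilon$ (and the set of integrands is uniformly bounded). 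Equicontinuity within each piece follows from \eqref{25} for the $\mathcal{S}$-terms, from the continuity of $(t,s)\mapsto\mathcal{C}(t,s)=-\partial_s\mathcal{S}(t,s)$ in Definition \ref{def2.1} for the $\mathcal{C}$-terms, and from the standard argument for the integral (splitting and using uniform integrability of the majorants of $\mathrm{B}u_\lambda$ and $f$). With these three Schauder hypotheses verified, Schauder's theorem produces a fixed point, which is the desired mild solution.
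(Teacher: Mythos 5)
Your overall architecture coincides with the paper's: the same operator $\Phi_\lambda$ built from the feedback control \eqref{4.1}, a self-mapping estimate on a ball driven by the bound $\|\lambda\mathrm{R}(\lambda,\Psi_{s_i}^{t_{i+1}})y\|_{\mathbb{X}}\le\|y\|_{\mathbb{X}}$ from Remark \ref{rem2.3} (you argue directly where the paper argues by contradiction, but the inequality extracted is the same, namely that the coefficient of $r$ is $K[1+(\tilde{M}M_B)^2T/\lambda]$), and compactness via Arzel\`a--Ascoli using (H1) with Theorem \ref{th2.1}, the complete continuity of $g$ and of $\rho_i(t,\cdot)$, and the compactness of the convolution term (your $\varepsilon$-truncation replaces the paper's citation of Lemma 3.2 and Corollary 3.3 of \cite{XJ}). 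These differences are cosmetic.

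The one step that does not work as you state it is the continuity of the control map. Demicontinuity of $\mathcal{J}$ gives only the weak convergence $\mathcal{J}[\mathrm{R}(\lambda,\Psi_{s_i}^{t_{i+1}})g_i(x^n(\cdot))]\xrightharpoonup{w}\mathcal{J}[\mathrm{R}(\lambda,\Psi_{s_i}^{t_{i+1}})g_i(x(\cdot))]$ in $\mathbb{X}^*$, and you then invoke ``strong continuity of $\mathcal{S}^*(t_{i+1},\cdot)$'' together with dominated convergence to conclude $u^n_{i,\lambda}(s)\to u_{i,\lambda}(s)$. Strong continuity (or mere boundedness) of $\mathcal{S}(t_{i+1},t)^*$ applied to a weakly convergent sequence yields only weak convergence of $u^n_{i,\lambda}(t)$, which is not enough to pass to the limit in $\int\|\mathcal{S}(t,s)\mathrm{B}[u^n_{i,\lambda}(s)-u_{i,\lambda}(s)]\|_{\mathbb{X}}\mathrm{d}s$. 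What is actually needed, and what the paper uses at \eqref{4.31}, is that $\mathcal{S}(t_{i+1},t)$ is compact (Theorem \ref{th2.1} under (H1)), hence so is its adjoint by Schauder's theorem on adjoints, and a compact operator maps weakly convergent sequences to norm-convergent ones; this upgrades the weak convergence coming from $\mathcal{J}$ to norm convergence of the controls. Since compactness of $\mathcal{S}$ is already in your hands for the compactness step, the repair is immediate, but as written this part of your continuity argument is a genuine gap.
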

	\begin{proof}
		Let $\mathcal{Z}:= \{x \in\mathrm{PC}(J;\mathbb{X}) : x(0) = \phi(0)+g(x)\}$ be the space endowed with the norm $\norm{\cdot}_{\mathrm{PC}(J;\mathbb{X})}$. We consider a set
		$$\mathcal{E}_r = \{x \in \mathcal{Z} : \norm{\cdot}_{\mathrm{PC}(J:\mathbb{X})}\le r\}$$ where $r$ is a positive constant. For $\lambda > 0$, let us define an operator $\Phi_{\lambda} : \mathcal{Z}\rightarrow \mathcal{Z}$ as
		$$(\Phi_{\lambda} x)(t) = z(t),$$
		where
		\begin{eqnarray}\label{operator}
		z(t)=\left\{
		\begin{aligned}
		& \mathcal{C}(t, 0)[\phi(0)+g(\tilde{x})] + \mathcal{S}(t,0)[\eta+h(\tilde{x})] +\int_{0}^{t}\mathcal{S}(t, s)[\mathrm{B}u_{\lambda}(s)+f(s,\tilde{x}_{s})] \mathrm{d}s, \  t \in  [0,t_{1} ], \\
		&\rho_{i}(t, \tilde{x}(t_{i}^{-})) ,\ t \in (t_{i},s_{i}],\ i=1,\ldots,N,\\
		& \mathcal{C}(t, s_{i})\rho_{i}(s_{i}, \tilde{x}(t_{i}^{-})) + \mathcal{S}(t, s_{i})\rho'_i(s_{i}, \tilde{x}(t_{i}^{-})) \\ &\quad +\int_{s_{i}}^{t}\mathcal{S}(t,s)[\mathrm{B}u_{\lambda}(s)+f(s,\tilde{x}_{s})] \mathrm{d}s, \ t \in(s_{i},t_{i+1} ],\ i=1,\ldots,N,
		\end{aligned}
		\right.
		\end{eqnarray} 
		with $u_\lambda$ is defined in \eqref{4.1} and $\tilde{x}(t)=\phi(t), t\in[-q,0)$ and $\tilde{x}(t)=x(t),t\in J$. By the definition of $\Phi_{\lambda}$, one can ensure that the problem of finding a mild solution of the system \eqref{1.1}-\eqref{1.5} is equivalent to finding a fixed point of the operator $\Phi_{\lambda}$. A proof that the operator $\Phi_{\lambda}$ has a fixed point is divided into the following steps. 
		
		\vskip 0.1in 
		\noindent\textbf{Step (1): }  \emph{$\Phi_\lambda(\mathcal{E}_r )\subseteq\mathcal{E}_r,$ for some $r$}. In contrast, we assume that our claim is not true. Then for any $\lambda > 0$ and for every $r > 0$, there exists $x^r(\cdot) \in \mathcal{E}_r$ such that $\norm{(\Phi_\lambda x^r)(t)}_{\mathbb{X}} > r$, for some $t\in J$ ($t$ may depend upon $r$). Taking  $t \in [0, t_1]$ and using the estimate \eqref{2.16}, Assumption \ref{as} \textit{(H2)}-\textit{(H4)}, we calculate
		\begin{align}\label{4.19}
		\nonumber	r&<\norm{(\Phi_\lambda x^r)(t)}_{\mathbb{X}} \\ \nonumber &= \norm{\mathcal{C}(t, 0)[\phi(0)+g(\tilde{x})] + \mathcal{S}(t,0)[\eta+h(\tilde{x})] +\int_{0}^{t}\mathcal{S}(t, s)[\mathrm{B}u_{0,\lambda}(s)+f(s,\tilde{x}_{s})] \mathrm{d}s}_{\mathbb{X}}\\ \nonumber &\le \norm{\mathcal{C}(t, 0)[\phi(0)+g(\tilde{x})]}_{\mathbb{X}}+\norm{\mathcal{S}(t,0)[\eta+h(\tilde{x})]}_{\mathbb{X}}+\int_{0}^{t}\norm{\mathcal{S}(t, s)[\mathrm{B}u_{0,\lambda}(s)+f(s,\tilde{x}_{s})]}_{\mathbb{X}}\mathrm{d}s\\ \nonumber &\le M[\norm{\phi(0)}_{\mathbb{X}}+M_g(\norm{\tilde{x}}_{\mathrm{PC}(J_q;\mathbb{X})}+1)]+\tilde{M}[\norm{\eta}_{\mathbb{X}}+M_{h}(\norm{x}_{\mathrm{PC}(J_q;\mathbb{X})}+1)]\\\nonumber&\quad+\tilde{M}M_B\int_{0}^{t}\norm{u_{0,\lambda}(s)}_{\mathbb{U}}\mathrm{d}s+\tilde{M}\int_{0}^{t}\norm{f(s,\tilde{x}_{s})}_{\mathbb{X}}\mathrm{d}s\\ \nonumber &\le M[\norm{\phi(0)}_{\mathbb{X}}+M_g(r_1+1)]+\tilde{M}[\norm{\eta}_{\mathbb{X}}+M_{h}(r_1+1)]+\frac{(\tilde{M}M_B)^2T}{\lambda}\norm{g_0(x(\cdot))}_{\mathbb{X}}\\ \nonumber&\quad+\tilde{M}\int_{0}^{t}\gamma(s)\mathrm{d}s\\&\le M\norm{\phi(0)}_{\mathbb{X}}+\tilde{M}\norm{\eta}_{\mathbb{X}}+K(r_1+1)+\frac{(\tilde{M}M_B)^2T}{\lambda}\left[\tilde{K}+K(r_1+1)+\tilde{M}\norm{\gamma}_{\mathrm{L}^1(J_1;\mathbb{X})}\right]\nonumber\\&\quad+\tilde{M}\norm{\gamma}_{\mathrm{L}^1(J_1;\mathbb{X})},
		\end{align}
		where $K=MM_g+\tilde{M}M_h,\ \tilde{K}=\norm{x_T}_{\mathbb{X}}+M\norm{\phi(0)}_{\mathbb{X}}+\tilde{M}\norm{\eta}_{\mathbb{X}}$ and  $r_1=r+\norm{\phi}_{\mathcal{D}}$. For $t \in (t_i, s_i], \ i = 1,\dots N$, we estimate
		\begin{align}\label{4.20}
		r<\norm{(\Phi_\lambda x^r)(t)}_{\mathbb{X}}=\norm{\rho_{i}(t, \tilde{x}(t_{i}^{-}))}_{\mathbb{X}}\le d_i\le d_i+Kr_1+\frac{(\tilde{M}M_B)^2Kr_1T}{\lambda}.
		\end{align}
		Taking $t\in(s_i,t_{i+1}],\ i=1,\dots, N,$ we evaluate
		\begin{align}\label{4.21}
		\nonumber	r &<\norm{(\Phi_\lambda x^r)(t)}_{\mathbb{X}} \\\nonumber&=\norm{\mathcal{C}(t, s_{i})\rho_{i}(s_{i}, \tilde{x}(t_{i}^{-})) + \mathcal{S}(t, s_{i})\rho'_i(s_{i}, \tilde{x}(t_{i}^{-}))   +\int_{s_{i}}^{t}\mathcal{S}(t,s)[\mathrm{B}u_{i,\lambda}(s)+f(s,\tilde{x}_{s})] \mathrm{d}s}_{\mathbb{X}}
		\\\nonumber &\le \norm{\mathcal{C}(t, s_{i})\rho_{i}(s_{i}, \tilde{x}(t_{i}^{-}))}_{\mathbb{X}}+\norm{\mathcal{S}(t, s_{i})\rho'_i(s_{i}, \tilde{x}(t_{i}^{-}))}_{\mathbb{X}}+\int_{s_{i}}^{t}\norm{\mathcal{S}(t,s)\mathrm{B}u_{i,\lambda}(s)}_{\mathbb{X}}\mathrm{d}s\\\nonumber &\quad+\int_{s_{i}}^{t}\norm{\mathcal{S}(t, s)f(s,\tilde{x}_{s})}_{\mathbb{X}}\mathrm{d}s
		\\ \nonumber &\le Md_i+\tilde{M}e_i+\tilde{M}M_B\int_{s_{i}}^{t}\norm{u_{i,\lambda}(s)}_\mathbb{U}\mathrm{d}s+\tilde{M}\int_{s_{i}}^{t}\norm{f(s,\tilde{x}_{s})}_{\mathbb{X}}\mathrm{d}s\\ \nonumber&\le Md_i+\tilde{M}e_i+\frac{(\tilde{M}M_B)^2T}{\lambda}\norm{g_i(x(\cdot))}_{\mathbb{X}}+\tilde{M}\int_{s_{i}}^{t}\gamma(s)\mathrm{d}s
		\\&\le Md_i+\tilde{M}e_i+\frac{(\tilde{M}M_B)^2T}{\lambda}\left[\norm{x_T}_{\mathbb{X}}+Md_i+\tilde{M}e_i+\tilde{M}\norm{\gamma}_{\mathrm{L}^1(J_1;\mathbb{R}^+)}\right]+\tilde{M}\norm{\gamma}_{\mathrm{L}^1(J_1;\mathbb{R}^+)}\nonumber\\&\le Md_i+\tilde{M}e_i+\frac{(\tilde{M}M_B)^2T}{\lambda}\left[K_i+\tilde{M}\norm{\gamma}_{\mathrm{L}^1(J_1;\mathbb{R}^+)}\right]+\tilde{M}\norm{\gamma}_{\mathrm{L}^1(J_1;\mathbb{R}^+)}\nonumber\\&\le Md_i+\tilde{M}e_i+Kr_1+\frac{(\tilde{M}M_B)^2T}{\lambda}\left[K_i+Kr_1+\tilde{M}\norm{\gamma}_{\mathrm{L}^1(J_1;\mathbb{R}^+)}\right]+\tilde{M}\norm{\gamma}_{\mathrm{L}^1(J_1;\mathbb{R}^+)},
		\end{align}
		where $K_i=\norm{x_T}_{\mathbb{X}}+Md_i+\tilde{M}e_i$ for $i=1,\ldots,N.$ Thus, dividing by $r$ in the expressions \eqref{4.19}, \eqref{4.20} and \eqref{4.21}, and then passing $r \rightarrow\infty$, we obtain
		\begin{align}
		K\left[1+\frac{(\tilde{M}M_B)^2T}{\lambda}\right]>1,
		\end{align}
		which is a contradiction to \eqref{4.18}. Hence, for some $r > 0, \ \Phi_{\lambda}(\mathcal{E}_{r})\subseteq \mathcal{E}_{r}.$ 
		\vskip 0.1in 
		\noindent\textbf{Step (2): } \emph{The operator $\Phi_\lambda$ is continuous}. For this, we take a sequence  $\{x^n\}_{n=1}^\infty\subseteq\mathcal{E}_{r}$ such that
		$x^n\rightarrow x$ in $\mathcal{E}_{r}$, that is,
		$$\lim\limits_{n\rightarrow\infty}\norm{x^n-x}_{\mathrm{PC}(J;\mathbb{X})}=0.$$
		For $s\in J$, we estimate
		\begin{align}\label{eqq1}
		\left\|\tilde{x_{s}^n}-\tilde{x_{s}}\right\|_{\mathcal{D}}&=\frac{1}{q}\int_{-q}^{0}\left\|\tilde{x_{s}^{n}}(\theta)-\tilde{x_{s}}(\theta)\right\|_{\mathbb{X}}\mathrm{d}\theta=\frac{1}{q}\int_{-q}^{0}\left\|\tilde{x^{n}}(s+\theta)-\tilde{x}(s+\theta)\right\|_{\mathbb{X}}\mathrm{d}\theta\nonumber\\&=\frac{1}{q}\int_{s-q}^{s}\left\|\tilde{x^{n}}(\tau)-\tilde{x}(\tau)\right\|_{\mathbb{X}}\mathrm{d}\tau.
		\end{align}
		If $s<q$, then  one can write the above expression as
		\begin{align*}
		\norm{\tilde{x_{s}^n}-\tilde{x_{s}}}_{\mathcal{D}}&=\frac{1}{q}\int_{s-q}^{0}\left\|\tilde{x^{n}}(\tau)-\tilde{x}(\tau)\right\|_{\mathbb{X}}\mathrm{d}\tau+\frac{1}{q}\int_{0}^{s}\left\|\tilde{x^{n}}(\tau)-\tilde{x}(\tau)\right\|_{\mathbb{X}}\mathrm{d}\tau\nonumber\\&\le\frac{1}{q}\int_{-q}^{0}\left\|\tilde{x^{n}}(\tau)-\tilde{x}(\tau)\right\|_{\mathbb{X}}\mathrm{d}r+\frac{1}{q}\int_{0}^{T}\left\|\tilde{x^{n}}(\tau)-\tilde{x}(\tau)\right\|_{\mathbb{X}}\mathrm{d}\tau\nonumber\\&=\frac{1}{q}\int_{0}^{T}\left\|x^{n}(\tau)-x(\tau)\right\|_{\mathbb{X}}\mathrm{d}\tau\le\frac{T}{q}\left\|x^n-x\right\|_{\mathrm{PC}(J;\mathbb{X})}\to 0,\ \mbox{ as }\ n\to\infty.
		\end{align*}
		If $s\ge q$, then by the expression \eqref{eqq1}, we obtain
		\begin{align*}
		\left\|\tilde{x_{s}^n}-\tilde{x_{s}}\right\|_{\mathcal{D}}&\le\frac{1}{q}\int_{0}^{s}\left\|\tilde{x^{n}}(\tau)-\tilde{x}(\tau)\right\|_{\mathbb{X}}\mathrm{d}\tau\le\frac{1}{q}\int_{0}^{T}\left\|x^{n}(\tau)-x(\tau)\right\|_{\mathbb{X}}\mathrm{d}\tau\\&\le \frac{T}{q}\left\|x^n-x\right\|_{\mathrm{PC}(J;\mathbb{X})}\to 0,\ \mbox{ as }\ n\to\infty.
		\end{align*}
		By using the  above convergences along with Assumption \ref{as} \textit{(H2)}, we immediately have 
		\begin{align}\label{3.14}
		\norm{f(s,\tilde{x^n_s})-f(s,\tilde{x}_{s})}_{\mathbb{X}}\to 0\ \mbox{ as }\ n\to\infty, \ \mbox{ uniformly for } \ s\in J. 
		\end{align}
		From the convergence \eqref{3.14}, Assumption \ref{as} (\textit{H2}),(\textit{H4}) and the dominated convergence theorem, we obtain
		\begin{align}\label{4.26}
		\norm{g_0(x^n(\cdot))-g_0(x(\cdot))}_{\mathbb{X}}&\le\norm{\mathcal{C}(t_1, 0)[g(\tilde{x^n})-g(\tilde{x})]}_{\mathbb{X}} + \norm{\mathcal{S}(t_1,0)[h(\tilde{x^n})-h(\tilde{x})]}_{\mathbb{X}}\nonumber\\&\quad+ \int_{0}^{t_1}\norm{\mathcal{S}(t_1, s)[f(s,\tilde{x^n_s})-f(s,\tilde{x}_{s})]}_{\mathbb{X}}\mathrm{d}s\nonumber\\&\le M\norm{g(\tilde{x^n})-g(\tilde{x})}_{\mathbb{X}}+\tilde{M}\norm{h(\tilde{x^n})-h(\tilde{x})}_{\mathbb{X}}\nonumber\\&\quad+M\int_{0}^{t_1}\norm{f(s,\tilde{x^n_s})-f(s,\tilde{x}_{s})}_{\mathbb{X}}\mathrm{d}s\nonumber\\&\rightarrow 0\  \text{ as }\  n\rightarrow\infty.
		\end{align}
		Similarly, for $i=1,\ldots,N$, we estimate
		\begin{align}\label{4.27}
		\nonumber&\norm{g_i(x^n(\cdot))-g_i(x(\cdot))}_{\mathbb{X}}\\\nonumber&\le \norm{\mathcal{C}(t_{i+1}, s_i)[\rho_{i}(s_{i}, \tilde{x^n}(t_{i}^{-}))-\rho_{i}(s_{i}, \tilde{x}(t_{i}^{-}))]}_{\mathbb{X}} + \norm{\mathcal{S}(t_{i+1},s_i)[\rho'_i(s_{i}, \tilde{x^n}(t_{i}^{-}))-\rho'_i(s_{i}, \tilde{x}(t_{i}^{-}))]}_{\mathbb{X}} \\\nonumber &\quad+\int_{s_i}^{t_{i+1}}\norm{\mathcal{S}(t_{i+1}, s)[f(s,\tilde{x^n_s})-f(s,\tilde{x}_{s})]}_{\mathbb{X}}\mathrm{d}s\\
		\nonumber &\le M\norm{\rho_{i}(s_{i}, \tilde{x^n}(t_{i}^{-}))-\rho_{i}(s_{i}, \tilde{x}(t_{i}^{-}))}_{\mathbb{X}}+\tilde{M}\norm{\rho'_i(s_{i}, \tilde{x^n}(t_{i}^{-}))-\rho'_i(s_{i}, \tilde{x}(t_{i}^{-}))}_{\mathbb{X}}\\\nonumber &\quad+\tilde{M}\int_{s_i}^{t_{i+1}}\norm{f(s,\tilde{x^n_s})-f(s,\tilde{x}_{s})}_{\mathbb{X}}\mathrm{d}s
		\\&\rightarrow 0\  \text{ as }\ n\rightarrow\infty,
		\end{align}
		where we used the convergence \eqref{3.14}, Assumption \ref{as} \textit{(H2)}-\textit{(H3)} and the dominated convergence theorem. By the convergences \eqref{4.26}, \eqref{4.27} and estimate \eqref{2.16}, we calculate
		\begin{align}
		\nonumber\norm{\mathrm{R}(\lambda,\Psi_{s_i}^{t_{i+1}})g_i(x^n(\cdot))-\mathrm{R}(\lambda,\Psi_{s_i}^{t_{i+1}})g_i(x(\cdot))}_{\mathbb{X}}
		&=\frac{1}{\lambda}\norm{\lambda\mathrm{R}(\lambda,\Psi_{s_i}^{t_{i+1}})\left[g_i(x^n(\cdot))-g_i(x(\cdot))\right] }_{\mathbb{X}}\\\nonumber&\le\frac{1}{\lambda}\norm{g_i(x^n(\cdot))-g_i(x(\cdot))}_{\mathbb{X}}\\
		&\rightarrow 0 \ \text{ as }\ n\rightarrow\infty, \ \text{ for }\ i=0,1,\ldots, N\nonumber.
		\end{align}
		Since the mapping $\mathcal{J}:\mathbb{X}\rightarrow\mathbb{X}$ is demicontinuous, we have
		\begin{align}\label{eq430}
		\mathcal{J}\left[\mathrm{R}(\lambda,\Psi_{s_i}^{t_{i+1}})g_i(x^n(\cdot))\right]\xrightharpoonup{{w}}\mathcal{J}\left[\mathrm{R}(\lambda,\Psi_{s_i}^{t_{i+1}})g_i(x(\cdot))\right]\ \text{ as }\ n\rightarrow\infty\ \text{ in }\ \mathbb{X}^*,
		\end{align}
		for $i=0,1,\ldots,N$. Since by Theorem \ref{th2.1}, we know that the operator $\mathcal{S}(t, s)$ is compact for all $s \le t$. Therefore, the operator $\mathcal{S}(t, s)^*$ is also compact. Hence, by using the compactness of that operator along with the weak convergence \eqref{eq430}, we obtain 
		\begin{align}\label{4.31}
		\nonumber\norm{u_{i,\lambda}^n(t)-u_{i,\lambda}(t)}_{\mathbb{U}} & \le\norm{\mathrm{B}^*\mathcal{S}(t_{i+1},t)^*\left[\mathcal{J}\left[\mathrm{R}(\lambda,\Psi_{s_i}^{t_{i+1}})g_i(x^n(\cdot))\right]-\mathcal{J}\left[\mathrm{R}(\lambda,\Psi_{s_i}^{t_{i+1}})g_i(x(\cdot))\right]\right]}_{\mathbb{U}}\\ \nonumber & \le M_B\norm{\mathcal{S}(t_{i+1},t)^*\left[\mathcal{J}\left[\mathrm{R}(\lambda,\Psi_{s_i}^{t_{i+1}})g_i(x^n(\cdot))\right]-\mathcal{J}\left[\mathrm{R}(\lambda,\Psi_{s_i}^{t_{i+1}})g_i(x(\cdot))\right]\right]}_{\mathbb{X}}\\
		&\rightarrow 0\ \text{ as }\ n\rightarrow\infty, \ \text{ for }\ t\in(s_i,t_{i+1}],\ i=0,1,\ldots,N.
		\end{align}
		Using the convergences \eqref{3.14}, \eqref{4.31} and the dominated convergence theorem, we compute
		\begin{align*}
		&\nonumber\norm{(\Phi_\lambda x^n)(t)-(\Phi_\lambda x)(t)}_{\mathbb{X}}\\\nonumber&\le\int_{0}^{t}\norm{\mathcal{S}(t,s)[\mathrm{B}u_{0,\lambda}^n(s)-\mathrm{B}u_{0,\lambda} (s)]}_{\mathbb{X}}\mathrm{d}s +\int_{0}^{t}\norm{\mathcal{S}(t, s)[f(s,\tilde{x^n_s})-f(s,\tilde{x}_{s})]}_{\mathbb{X}}\mathrm{d}s\\\nonumber&\le
		\tilde{M}M_B \int_{0}^{t}\norm{u_{0,\lambda}^n(s)-u_{0,\lambda}(s)}_{\mathbb{U}}\mathrm{d}s+\tilde{M}\int_{0}^{t}\norm{f(s,\tilde{x^n_s})-f(s,\tilde{x}_{s})}_{\mathbb{X}} \mathrm{d}s\\&\rightarrow 0 \ \text{ as } \ n\rightarrow\infty, \text{ for } \ t\in[0,t_1].
		\end{align*}
		Similarly, for $t\in (s_i,t_{i+1}],\ i=1,\ldots,N,$ we calculate
		\begin{align*}
		&\nonumber\norm{(\Phi_\lambda x^n)(t)-(\Phi_\lambda x)(t)}_{\mathbb{X}}\\\nonumber&\le\norm{\mathcal{C}(t, s_{i})\left(\rho_{i}(s_{i}, \tilde{x^n}(t_{i}^{-})-\rho_{i}(s_{i}, \tilde{x}(t_{i}^{-}))\right)}_{\mathbb{X}} + \norm{\mathcal{S}(t, s_{i})\left(\rho'_i(s_{i}, \tilde{x^n}(t_{i}^{-}))-\rho'_i(s_{i}, \tilde{x}(t_{i}^{-}))\right)}_{\mathbb{X}}\\\nonumber&\quad+\int_{s_{i}}^{t}\norm{\mathcal{S}(t, s)\left(\mathrm{B}u^n_{i,\lambda}(s)-\mathrm{B}u_{i,\lambda}(s)\right)}_{\mathbb{X}} \mathrm{d}s+\int_{s_{i}}^{t}\norm{\mathcal{S}(t, s)f(s,\tilde{x^n_s})-f(s,\tilde{x}_{s})}_{\mathbb{X}}\mathrm{d}s\\
		\nonumber&\le M\norm{\rho_{i}(s_{i}, \tilde{x^n}(t_{i}^{-}))-\rho_{i}(s_{i}, \tilde{x}(t_{i}^{-}))}_{\mathbb{X}} + \tilde{M}\norm{\left(\rho'_i(s_{i}, \tilde{x^n}(t_{i}^{-}))-\rho'_i(s_{i}, \tilde{x}(t_{i}^{-}))\right)}_{\mathbb{X}}\\\nonumber&\quad+\tilde{M}M_B\int_{s_{i}}^{t}\norm{u^n_{i,\lambda}(s)-u_{i,\lambda}(s)}_{\mathbb{X}} \mathrm{d}s
		+\tilde{M}\int_{s_{i}}^{t}\norm{f(s,\tilde{x^n}_{s})-f(s,\tilde{x}_{s})}_{\mathbb{X}}\mathrm{d}s\\
		&\rightarrow 0\text{ as }n\rightarrow\infty.
		\end{align*}
		Moreover, for $t\in (t_i,s_i],\ i=1,\ldots, N,$ applying the Assumption \ref{as} \textit{(H3)}, we get
		\begin{align*}
		\nonumber\norm{(\Phi_\lambda x^n)(t)-(\Phi_\lambda x)(t)}_{\mathbb{X}}&=\norm{\rho_{i}(t, \tilde{x^n}(t_{i}^{-}))-\rho_{i}(t, \tilde{x}(t_{i}^{-}))}_{\mathbb{X}}\\&\rightarrow 0 \ \text{ as }\ n\rightarrow \infty.
		\end{align*}
		Hence, it follows that $\Phi_\lambda$ is continuous.
		
		\vskip 0.1in 
		\noindent\textbf{Step (3): } \emph{$\Phi_\lambda$ is a compact operator}. To achieve this goal, we use the infinite-dimensional version of the Ascoli-Arzela theorem (see, Theorem 3.7, Chapter 2, \cite{XJ}). To apply the Ascoli-Arzela theorem, in view of  Step 1, it is enough to show that 
		\begin{itemize}
			\item [(i)] the image of $\mathcal{E}_r$ under $\Phi_{\lambda}$ is equicontinuous,
			\item [(ii)] for an arbitrary $t\in J$, the set $\mathcal{V}(t)=\{(\Phi_\lambda x)(t):x\in \mathcal{E}_r\}$ is relatively compact.
		\end{itemize}
		
		Firstly, we claim that the image of $\mathcal{E}_r$ under $\Phi_\lambda$ is equicontinuous. For $\tau_1,\tau_2\in[0,t_1]$ with  $\tau_1<\tau_2$ and $x \in \mathcal{E}_r$, we compute
		the following
		\begin{align}\label{4.35}
		\nonumber &\norm{(\Phi_\lambda x)(\tau_2)-(\Phi_\lambda x)(\tau_1)}_{\mathbb{X}}\\\nonumber&\le \norm{\mathcal{C}(\tau_2, 0)\phi(0)-\mathcal{C}(\tau_1, 0)\phi(0)}_{\mathbb{X}} +\norm{\mathcal{C}(\tau_2, 0)g(\tilde{x})-\mathcal{C}(\tau_1, 0)g(\tilde{x})}_{\mathbb{X}}\\\nonumber&\quad +\norm{[\mathcal{S}(\tau_2,0)-\mathcal{S}(\tau_1,0)]\eta}_{\mathbb{X}}+\norm{[\mathcal{S}(\tau_2,0)-\mathcal{S}(\tau_1,0)]h(\tilde{x})}_{\mathbb{X}}\\\nonumber&\quad + \int_{\tau_1}^{\tau_2}\norm{\mathcal{S}(\tau_2,s)f(s,\tilde{x}_{s})}_{\mathbb{X}}\mathrm{d}s+ \int_{\tau_1}^{\tau_2}\norm{\mathcal{S}(\tau_2,s)\mathrm{B}u_{0,\lambda}(s)}_{\mathbb{X}}\mathrm{d}s\\\nonumber&\quad+\int_{0}^{\tau_1}\norm{\mathcal{S}(\tau_2,s)-\mathcal{S}(\tau_1,s)\mathrm{B}u_{0,\lambda}(s)}_{\mathbb{X}}\mathrm{d}s\\\nonumber&\le\norm{\mathcal{C}(\tau_2, 0)\phi(0)-\mathcal{C}(\tau_1, 0)\phi(0)}_{\mathbb{X}}+\norm{\mathcal{C}(\tau_2, 0)h(\tilde{x})-\mathcal{C}(\tau_1,0)h(\tilde{x})}_{\mathbb{X}}\\ \nonumber&\quad
		+\norm{\mathcal{S}(\tau_2,0)-\mathcal{S}(\tau_1,0)}_{\mathcal{L}(\mathbb{X})}\norm{\eta}_{\mathbb{X}}+\norm{\mathcal{S}(\tau_2,0)-\mathcal{S}(\tau_1,0)}_{\mathcal{L}(\mathbb{X})}(r_1+1)\\\nonumber&\quad+\tilde{M}\int_{\tau_1}^{\tau_2}\gamma(s)\mathrm{d}s+\frac{(\tilde{M}M_B)^2}{\lambda}\left[\tilde{K}+K(r_1+1)+\tilde{M}\norm{\gamma}_{\mathrm{L}^1(J_1;\mathbb{X})}\right](\tau_2-\tau_1)\\&\quad+\!\!\sup\limits_{t\in[0,\tau_1]}\norm{\mathcal{S}(\tau_2,t)-\mathcal{S}(\tau_1,t)}_{\mathcal{L}(\mathbb{X})}M_B\int_{0}^{\tau_1}\norm{u_{0,\lambda}(s)}_{\mathbb{U}}\mathrm{d}s\\\nonumber&\quad+\sup\limits_{t\in[0,\tau_1]}\norm{\mathcal{S}(\tau_2,t)-\mathcal{S}(\tau_1,t)}_{\mathbb{X}}\int_{0}^{\tau_1}\gamma(s)\mathrm{d}s.
		\end{align}
		Similarly for $\tau_1,\tau_2\in (s_i,t_{i+1}],\ i=1,\ldots,N\ \text{with}\ \tau_1<\tau_2 \text{ and }x\in\mathcal{E}_r,$ we estimate
		\begin{align}\label{4.36}
		\nonumber &\norm{(\Phi_\lambda x)(\tau_2)-(\Phi_\lambda x)(\tau_1)}_{\mathbb{X}}\\\nonumber&\le\norm{\mathcal{C}(\tau_2, s_{i})\rho_{i}(s_{i}, \tilde{x}(t_{i}^{-}))-\mathcal{C}(\tau_1, s_{i})\rho_{i}(s_{i}, \tilde{x}(t_{i}^{-}))}_{\mathbb{X}} + \norm{\left(\mathcal{S}(\tau_2, s_{i})-\mathcal{S}(\tau_1, s_{i})\right)\rho'_i(s_{i}, \tilde{x}(t_{i}^{-}))}_{\mathbb{X}}\\\nonumber&\quad+\int_{\tau_1}^{\tau_2}\norm{\mathcal{S}(\tau_2, s)\mathrm{B}u_{i,\lambda}(s)}_{\mathbb{X}} \mathrm{d}s+\int_{\tau_1}^{\tau_2}\norm{\mathcal{S}(\tau_2, s)f(s,\tilde{x}_{s})}_{\mathbb{X}}\mathrm{d}s\\\nonumber &\quad+\int_{s_i}^{\tau_1}\norm{\left(\mathcal{S}(\tau_2, s)-\mathcal{S}(\tau_2, s)\right)\mathrm{B}u_{i,\lambda}(s)}_{\mathbb{X}} \mathrm{d}s+\int_{s_{i}}^{\tau_1}\norm{\left(\mathcal{S}(\tau_2, s)-\mathcal{S}(\tau_1, s)\right)f(s,\tilde{x}_{s})}_{\mathbb{X}}\mathrm{d}s\\
		\nonumber&\le\norm{\mathcal{C}(\tau_2, s_{i})\rho_{i}(s_{i}, \tilde{x}(t_{i}^{-}))-\mathcal{C}(\tau_1, s_{i})\rho_{i}(s_{i}, \tilde{x}(t_{i}^{-}))}_{\mathbb{X}}\!\!+\!  \norm{\mathcal{S}(\tau_2, s_{i})-\mathcal{S}(\tau_1, s_{i})}_{\mathcal{L}(\mathbb{X})}e_i\\\nonumber&\quad +\frac{(\tilde{M}M_B)^2}{\lambda}\left[K_i+Kr_1+\tilde{M}\norm{\gamma}_{\mathrm{L}^1(J;\mathbb{R}^+)}\right](\tau_2-\tau_1)+\tilde{M}\int_{\tau_1}^{\tau_2}\gamma(s)\mathrm{d}s\\&\quad+\sup\limits_{t\in[s_i,\tau_1]}\norm{\mathcal{S}(\tau_2, t)-\mathcal{S}(\tau_1, t)}_{\mathcal{L}(\mathbb{X})}M_B\int_{s_{i}}^{\tau_1}\norm{u_{i,\lambda}(s)}_{\mathbb{U}}\mathrm{d}s\nonumber\\&\quad+\sup\limits_{t\in[s_i,\tau_1]}\norm{\mathcal{S}(\tau_2, t)-\mathcal{S}(\tau_1, t)}_{\mathcal{L}(\mathbb{X})} \int_{s_{i}}^{\tau_1}\gamma(s)\mathrm{d}s
		\end{align}
		Moreover, for $\tau_{1},\tau_2\in (t_i,s_i],\text{ with }\tau_1<\tau_2\text{ and }x\in \mathcal{E}_r$, we have
		\begin{align} \label{4.37}
		\norm{(\Phi_\lambda x)(\tau_2)-(\Phi_\lambda x)(\tau_1)}_{\mathbb{X}} &=\norm{\rho_{i}(\tau_2, \tilde{x}(t_{i}^{-}))-\rho_{i}(\tau_1, \tilde{x}(t_{i}^{-}))}_{\mathbb{X}}.
		\end{align}
		Using the facts, the operator $\mathcal{C}(t,s)x,\ x\in\mathbb{X}$ is uniformly continuous for $t,s\in J$, the operator $\mathcal{S}(\cdot,s)$ is Lipschitz continuous for all $s\in J$ in uniform operator topology (see \eqref{25}) and the continuity of the impulses $\rho_{i}(\cdot, x)$ for each $x\in\mathbb{X}$, we obtain that the right hand side of the expressions \eqref{4.35}, \eqref{4.36} and \eqref{4.37}, converge to zero as $|\tau_2-\tau_1|\rightarrow 0$. Hence, the image of $\mathcal{E}_r$ under $\Phi_\lambda$ is equicontinuous.

		Next, we verify that for each $\lambda>0,$ the image of $\mathcal{E}_r$ under the operator $\Phi_\lambda$ is relatively compact. For this, we show that the set $\mathcal{V} (t) = \{(\Phi_{\lambda}x)(t) : x \in \mathcal{E}_r\}$ is relatively compact for every $t \in J$. The set $\mathcal{V}(t),$ for $t\in J$, is relatively compact in $\mathcal{E}_r$ follows by the facts that the  operator $\mathcal{S}(t,s)$ is compact for $t\le s$, the impulses $\rho_{i}(t,\cdot), i=1,\ldots,N,\ t\in J_1$ and the functions $g(\cdot)$ are completely continuous for each $x\in\mathrm{PC}(J_q;\mathbb{X})$ and also the compactness of the operator $(\mathcal{Q})(\cdot) =\int_{0}^{\cdot}\mathcal{S}(\cdot,s)f(s)\mathrm{d}s$ (Lemma 3.2, Corollary 3.3, Chapter 3, \cite{XJ}). Therefore, the set $\mathcal{V} (t) = \{(\Phi_{\lambda}x)(t) : x \in \mathcal{E}_r\},$ for each $t \in J$ is relatively compact in $\mathcal{E}_r$.
		
		Hence, the operator $\Phi_\lambda$ is compact in view of Ascoli-Arzela theorem. Then by an application of \emph{Schauder's fixed point theorem} yields that the operator $\Phi_\lambda$ has a fixed point $\tilde{x}(\cdot)$ in $\mathcal{E}_r$. Thus, by the definition of $\tilde{x}(\cdot)$, we obtain that  $\tilde{x}(\cdot)$ is a mild solution of the system \eqref{1.1}-\eqref{1.5}.
	\end{proof}
	In next theorem, we determine the approximate controllability of the system \eqref{1.1}-\eqref{1.5}. For this, we impose the following assumption on $f(\cdot,\cdot)$.
	\begin{Ass}\label{as1}
		\begin{itemize}
			\item[\textit{(H5)}] The function $f: J_1\times \mathcal{D}\rightarrow\mathbb{X}$ satisfies the Assumption (\textit{H2})(i) and  there exists a constant $\mathcal{N}(t)\in\mathrm{L}^2(J_1;\mathbb{R}^+)$ such that
			$$ \norm{f(t,\phi)}_{\mathbb{X}}\le \mathcal{N}(t), \text{ for a.e.}\ t\in J_1 \text{ and }\phi\in \mathcal{D}.$$
		\end{itemize}
	\end{Ass}
	\begin{thm}
		Suppose that the Assumptions (H0)-(H1), (H3)-(H5) and the condition \eqref{4.18} hold. Then the system \eqref{1.1}-\eqref{1.5} is approximately controllable.
	\end{thm}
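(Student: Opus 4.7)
The plan is to construct, for each $\lambda>0$, a mild solution $x^\lambda$ driven by the control $u_\lambda$ of \eqref{4.1}, to evaluate $x^\lambda(T)$ in closed form via the resolvent equation \eqref{2.15}, and to drive the resulting discrepancy $\|x^\lambda(T)-x_T\|_{\mathbb{X}}$ to zero along a subsequence $\lambda_k\downarrow 0$. Since $x_T\in\mathbb{X}$ is arbitrary, this will yield $\overline{\mathcal{K}(T,\phi,\eta)}=\mathbb{X}$.

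First I would invoke Theorem \ref{th3.1} to secure $x^\lambda$ and evaluate the mild-solution representation \eqref{DEF2.10} on the last subinterval at $t=T$. Positive homogeneity of $\mathcal{J}$ combined with \eqref{2.15} transferred to $\Psi_{s_N}^{T}$ (legitimate by Remark \ref{rem2.3} and its extension to $\Psi_{s_i}^{t_{i+1}}$ noted just before Theorem \ref{th3.1}) yields the key algebraic identity
\begin{equation*}
\Psi_{s_N}^{T}\mathcal{J}\bigl[\mathrm{R}(\lambda,\Psi_{s_N}^{T})g\bigr]=g-\lambda\mathrm{R}(\lambda,\Psi_{s_N}^{T})g,\qquad g\in\mathbb{X}.
\end{equation*}
Applied with $g=g_N(x^\lambda)$ and substituted into the control integral, and using the very definition of $g_N(x^\lambda)$, the cancellations collapse the terminal value to
\begin{equation*}
x^\lambda(T)-x_T=-\lambda\mathrm{R}(\lambda,\Psi_{s_N}^{T})g_N(x^\lambda).
\end{equation*}

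The core of the argument is to extract a subsequence $\lambda_k\downarrow 0$ along which $g_N(x^{\lambda_k})$ converges strongly in $\mathbb{X}$ to some $g^\infty$. The integral piece $\int_{s_N}^T\mathcal{S}(T,s)f(s,\tilde{x}^{\lambda_k}_s)\mathrm{d}s$ is handled via (H5): the $\mathrm{L}^2$-bound on $\{f(\cdot,\tilde{x}^\lambda_\cdot)\}$ together with reflexivity of $\mathbb{X}$ yields a weakly convergent subsequence in $\mathrm{L}^2(J_1;\mathbb{X})$, while the compactness of $\mathcal{S}(T,s)$ provided by (H1) and Theorem \ref{th2.1} makes the Bochner integral operator $\mathrm{L}^2(J_1;\mathbb{X})\to\mathbb{X}$ compact, upgrading weak into strong convergence. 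The term $\mathcal{S}(T,s_N)\rho'_N(s_N,\tilde{x}^\lambda(t_N^-))$ is bounded in $\mathbb{X}$ by (H3)(iii); reflexivity gives a weakly convergent subsequence and compactness of $\mathcal{S}(T,s_N)$ again converts it to strong convergence. For the term $\mathcal{C}(T,s_N)\rho_N(s_N,\tilde{x}^\lambda(t_N^-))$, the cosine-family-derived operator $\mathcal{C}(T,s_N)$ is not expected to be compact, so one exploits the complete continuity of $\rho_N(s_N,\cdot)$ in (H3)(ii) together with (H3)(iii) to extract a strongly convergent subsequence of $\{\rho_N(s_N,\tilde{x}^{\lambda_k}(t_N^-))\}$; the bounded linear $\mathcal{C}(T,s_N)$ then preserves strong convergence. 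Intersecting the three extractions produces one common subsequence along which $g_N(x^{\lambda_k})\to g^\infty$ in $\mathbb{X}$.

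Finally, \eqref{2.16} gives $\|\lambda\mathrm{R}(\lambda,\Psi_{s_N}^T)\|_{\mathcal{L}(\mathbb{X})}\le 1$ for all $\lambda>0$, so
\begin{equation*}
\|x^{\lambda_k}(T)-x_T\|_{\mathbb{X}}\le\|g_N(x^{\lambda_k})-g^\infty\|_{\mathbb{X}}+\|\lambda_k\mathrm{R}(\lambda_k,\Psi_{s_N}^T)g^\infty\|_{\mathbb{X}},
\end{equation*}
whose first term vanishes by the compactness step above and whose second vanishes by (H0), applied to $\Psi_{s_N}^T$. The main obstacle is the strong-convergence step: the three extractions must be arranged along a single subsequence, and the absence of compactness of $\mathcal{C}(T,s_N)$ makes the complete continuity of $\rho_N(s_N,\cdot)$ in (H3)(ii) indispensable, since with only a weakly convergent $g_N(x^{\lambda_k})$ the first term on the right above would not in general tend to zero.
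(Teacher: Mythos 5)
Your proposal is correct and follows essentially the same route as the paper: the closed-form identity $x^\lambda(T)-x_T=-\lambda\mathrm{R}(\lambda,\Psi_{s_N}^{T})g_N(x^\lambda)$ obtained from the resolvent equation, strong convergence of $g_N(x^{\lambda})$ along a subsequence via the compactness of $\mathcal{S}(T,s)$ (for the force and $\rho_N'$ terms) and the complete continuity of $\rho_N(s_N,\cdot)$ (for the $\mathcal{C}(T,s_N)$ term), and the final split using $\|\lambda\mathrm{R}(\lambda,\Psi_{s_N}^T)\|_{\mathcal{L}(\mathbb{X})}\le 1$ together with (H0). The only cosmetic difference is that the paper first extracts a weak limit $z(t)$ of $x^\lambda(t)$ and identifies the limit of $g_N(x^\lambda)$ as an explicit element $\omega$, whereas you extract convergent subsequences term by term without naming the limit, which is equally valid.
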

	\begin{proof}
		By using Theorem \ref{th3.1}, for every $\lambda>0\text{ and }x_T\in \mathbb{X},$ there exists a mild solution $x^\lambda(\cdot)$  with the control defined in \eqref{4.1} satisfying
		\begin{equation}\label{mild}
		x^\lambda(t)=\left\{
		\begin{aligned}
		& \mathcal{C}(t, 0)\phi(0) + \mathcal{S}(t,0)\eta +\int_{0}^{t}\mathcal{S}(t, s)[\mathrm{B}u_{\lambda}(s)+f(s,\tilde{x^\lambda}_{s})] \mathrm{d}s, \  t \in  (0,t_{1} ], \\
		&\rho_{i}(t, \tilde{x^\lambda}(t_{i}^{-})) ,\ t \in  	\bigcup_{i=1}^{N} (t_{i},s_{i} ],\\
		& \mathcal{C}(t, s_{i})\rho_{i}(s_{i}, \tilde{x^\lambda}(t_{i}^{-})) + \mathcal{S}(t, s_{i})\rho'_i(s_{i}, \tilde{x^\lambda}(t_{i}^{-})) \\ &\quad +\int_{s_{i}}^{t}\mathcal{S}(t, s)[\mathrm{B}u_{\lambda}(s)+f(s,\tilde{x^\lambda}_{s})] \mathrm{d}s, \ t \in  	\bigcup_{i=1}^{N} (s_{i},t_{i+1} ].
		\end{aligned}
		\right.
		\end{equation}  
		Next, we estimate 
		\begin{align}\label{3.43}
		\nonumber x^\lambda(T)&=\mathcal{C}(T, s_{N})\rho_{N}(s_{N}, \tilde{x^\lambda}(t_{N}^{-})) + \mathcal{S}(T, s_{N})\rho'_{N}(s_{N}, \tilde{x^\lambda}(t_{N}^{-}))\\\nonumber&\quad +\int_{s_{N}}^{T}\mathcal{S}(T, s)[\mathrm{B}u_{N,\lambda}(s)+f(s,\tilde{x^\lambda}_{s})] \mathrm{d}s\\
		\nonumber&=\mathcal{C}(T, s_{N})\rho_{N}(s_{N}, \tilde{x^\lambda}(t_{N}^{-})) + \mathcal{S}(T, s_{N})\rho'_{N}(s_{N}, \tilde{x^\lambda}(t_{N}^{-}))+\int_{s_{N}}^{T}\mathcal{S}(T, s)f(s,\tilde{x^\lambda}_{s}) \mathrm{d}s\\ \nonumber&\quad+\int_{s_{N}}^{T}\mathcal{S}(T,s)\mathrm{B}\mathrm{B}^*S^*(T,s)\mathcal{J}\left[\mathrm{R}(\lambda,\Psi_{s_N}^{T})g_N(x^\lambda(\cdot))\right]\mathrm{d}s\\\nonumber
		&=\mathcal{C}(T, s_{N})\rho_{N}(s_{N}, \tilde{x^\lambda}(t_{N}^{-})) + \mathcal{S}(T, s_{N})\rho'_{N}(s_{N}, \tilde{x^\lambda}(t_{N}^{-}))+\int_{s_{N}}^{T}\mathcal{S}(T, s)f(s,\tilde{x^\lambda}_{s}) \mathrm{d}s\\\nonumber&\quad+\Psi_{s_N}^{T}\mathcal{J}\left[\mathrm{R}(\lambda,\Psi_{s_N}^{T})g_N(x^\lambda(\cdot))\right]\\
		&=x_T-\lambda\mathrm{R}(\lambda,\Psi_{s_N}^{T})g_N(x^\lambda(\cdot)).
		\end{align}
		Moreover, since $ x^{\lambda}\in\mathcal{E}_r $ implies that the sequence $ \{x^{\lambda}(t)\}_{\lambda>0},$  for each $t\in J$ is bounded in $\mathbb{X}$. Then by an application of the Banach-Alaoglu theorem, we can find a subsequence, still denoted as $ x^{\lambda},$ such that 
		\begin{align*}
		x^\lambda(t)\xrightharpoonup{w}z(t) \ \mbox{ in }\ \mathbb{X} \ \ \mbox{as}\  \ \lambda\to0^+,\ t\in J.
		\end{align*}
		Using the condition (\textit{$H3$}) of Assumption \ref{as}, we obtain
		\begin{align}
		\label{3.40}	\rho_N(t,x^\lambda(t_N^-))&\to \rho_N(t,z(t_N^-)) \ \mbox{ in }\ \mathbb{X} \ \ \mbox{as}\  \ \lambda\to0^+,\\
		\label{3.41}	\rho'_N(t,x^\lambda(t_N^-))&\xrightharpoonup{{w}}\rho'_N(t,z(t_N^-)) \ \mbox{ in }\ \mathbb{X} \ \ \mbox{as}\  \ \lambda\to0^+.
		\end{align}
		Furthermore,  by condition \textit{(H5)} of Assumption \ref{as1}, we have 
		\begin{align}
		\nonumber	\int_{s_N}^{T}\norm{f(t,\tilde{x^\lambda_{s}})}^2_{\mathbb{X}} \mathrm{d}s\le \int_{s_N}^{T}\mathcal{N}^2(s)\mathrm{d}s<+\infty.
		\end{align}
		Therefore, the sequence $\{f(\cdot,\tilde{x^\lambda_{s}}):\lambda>0\}$ in $\mathrm{L}^2([s_N,T];\mathbb{X})$ is bounded. Once again by the Banach-Alaoglu theorem, there exists a subsequence still denoted by $\{f(\cdot,\tilde{x^\lambda_{s}}):\lambda>0\}$ such that
		\begin{align}\label{3.42}
		f(\cdot,\tilde{x^\lambda_{s}})\xrightharpoonup{{w}} f(\cdot) \ \text{ in } \ \mathrm{L}^2([s_N,T];\mathbb{X}).
		\end{align}
		Next, we evaluate
		\begin{align}\label{3.44}
		\nonumber \norm{g_N(x^\lambda(\cdot))-\omega}_{\mathbb{X}}&\le \norm{\mathcal{C}(T,s_N)[\rho_{N}(s_{N},\tilde{x^\lambda}(t_{N}^{-}))-\rho_{N}(s_{N},z(t_{N}^{-}))]}_{\mathbb{X}}\\\nonumber&\quad+\norm{\mathcal{S}(T,s_N)[\rho'_{N}(s_{N},\tilde{x^\lambda}(t_{N}^{-}))-\rho'_{N}(s_{N},z(t_{N}^{-}))]}_{\mathbb{X}}\\\nonumber&\quad+\int_{s_{N}}^{T}\norm{\mathcal{S}(T, s)\left(f(s,\tilde{x^\lambda_s})-f(s)\right)}_{\mathbb{X}} \mathrm{d}s\\\nonumber&\le M\norm{\rho_{N}(s_{N},\tilde{x^\lambda}(t_{N}^{-}))-\rho_{N}(s_{N},z(t_{N}^{-}))}_{\mathbb{X}}\\\nonumber&\quad+\norm{\mathcal{S}(T,s_N)[\rho'_{N}(s_{N},\tilde{x^\lambda}(t_{N}^{-}))-\rho'_{N}(s_{N},z(t_{N}^{-}))]}_{\mathbb{X}}\\\nonumber&\quad+\int_{s_{N}}^{T}\norm{\mathcal{S}(T, s)\left(f(s,\tilde{x^\lambda}_{s})-f(s)\right)}_{\mathbb{X}} \mathrm{d}s\\&\rightarrow 0 \text{ as }\lambda\rightarrow 0^+.
		\end{align}
		where
		\begin{align}
		\nonumber \omega&=x_{T}-\mathcal{C}(T, s_N)\rho_{N}(s_{N}, z(t_{N}^{-}))  - \mathcal{S}(T,s_N)\rho'_{N}(s_{N}, z(t_{N}^{-}))  -\int_{s_N}^{T}\mathcal{S}(T, s)f({s})\mathrm{d}s,
		\end{align}
		and  we used the convergences \eqref{3.40},\eqref{3.41},\eqref{3.42} and the compactness of the operators $\mathcal{S}(\cdot,\cdot):\mathbb{X}\to\mathbb{X}$ and $(\mathcal{Q})(\cdot) =\int_{0}^{\cdot}\mathcal{S}(\cdot,s)f(s)\mathrm{d}s:\mathrm{L}^2(J;\mathbb{X}) \rightarrow \mathcal{C}(J;\mathbb{X})$ (Lemma 3.2, Corollary 3.3, Chapter 3, \cite{XJ}).
		
		Finally, by using the expression \eqref{3.43}, the convergence \eqref{3.44} and Assumption \ref{as} \textit{(H0)}, we obtain
		\begin{align}\nonumber\norm{x^\lambda(T)-x_T}_{\mathbb{X}}&= \norm{\lambda\mathrm{R}(\lambda,\Psi_{s_N}^{T})g_N(x^\lambda(\cdot))}_{\mathbb{X}}\\
		&\le \nonumber \norm{\lambda\mathrm{R}(\lambda,\Psi_{s_N}^{T})g_N(x^\lambda(\cdot))-\omega}_{\mathbb{X}}+\norm{\lambda\mathrm{R}(\lambda,\Psi_{s_N}^{T})\omega}_{\mathbb{X}}\\
		\nonumber &\le\norm{\lambda\mathrm{R}(\lambda,\Psi_{s_N}^{T})}_{\mathcal{L}(\mathbb{X})}\norm{g_N(x^\lambda(\cdot))-\omega}_{\mathbb{X}}+\norm{\lambda\mathrm{R}(\lambda,\Psi_{s_N}^{T})\omega}_{\mathbb{X}}\\
		&\rightarrow 0 \ \text{ as }\ \lambda\rightarrow 0^+,
		\end{align}
	 Hence, the system \eqref{1.1}-\eqref{1.5} is approximately controllable on $J$.
	\end{proof}

	\section{Application}\label{Appl} \setcounter{equation}{0} 
	In this section, we investigate the approximate controllability of the non-autonomous wave equation with non-instantaneous impulses and finite delay. 
	\begin{Ex}\label{ex} Let us consider the following wave equation: 
		\begin{equation}\label{ex1}
		\left\{
		\begin{aligned}
		\frac{\partial^2y(t,\xi)}{\partial t^2}&= \frac{\partial^2y(t,\xi)}{\partial \xi^2}+b(t)\frac{\partial y(t,\xi)}{\partial \xi}+\mu(t,\xi)+k_0\cos\left(\frac{2\pi t}{T}\right)\sin (y(t-r,\xi)), \\  & \qquad t\in\cup_{i=0}^{N} (s_i, t_{i+1}]\subset J=[0,T], \ \xi\in[0,2\pi],\\
		y(t,\xi)&=\rho_i(t,y(t_i^-,\xi)),\ t\in(t_i,s_i], \ i=1,\ldots,N,\ \xi\in[0,2\pi],\\
		\frac{\partial y(t,\xi)}{\partial t}&=\frac{\partial \rho_i(t,y(t_i^-,\xi))}{\partial t},t\in(t_i,s_i], \ i=1,\ldots,N,\ \xi\in[0,2\pi],\\
		y(t,0)&=y(t,2\pi),\ t\in J,\\
		y(0,\xi)&=\varphi(0,\xi)+ \int_{-r}^{T}\varrho(s)\log(1+|y(s, \xi)|)\mathrm{d} s, \ \xi\in[0,2\pi], \\ \frac{\partial y(0,\xi)}{\partial t}&=\zeta_0(\xi)+\sum\limits_{j=1}^{q}c_{j}y(\tau_{j}, \xi),\  -r<\tau_{1}<\tau_{2},\ldots,<\tau_{q}<T, \ \xi\in[0,2\pi],\\
		y(\theta,\xi)&=\varphi(\theta,\xi), \ \xi\in[0,2\pi], \ \theta\in[-r, 0),
		\end{aligned}
		\right.
		\end{equation}
		where $\varphi: [-r,0]\times [0,2\pi]\rightarrow \mathbb{R}$ is a piecewise continuous function, the function $\mu:J\times[0,2\pi]\to\mathbb{C}$ is continuous in $t$ and satisfies $\mu(t,0)=\mu(t,2\pi),$ for each $t\in J,$ and the function  $b:J\to\mathbb{R}$ is continuous. The functions $\varrho$ and $\rho_i$ for $i=1,\ldots,N$ satisfy suitable conditions, which will be described later.
	\end{Ex}
	Let $\mathbb{X}_p=\mathrm{L}^{p}(\mathbb{T};\mathbb{C}),$ for $p\in[2,\infty),$ be the space of $p$-integrable  functions (Lebesgue) from $\mathbb{R}$ into $\mathbb{C}$ with period $2\pi$, where $\mathbb{T}$ is the quotient group $\mathbb{R}/2\pi\mathbb{Z}$ and $\mathbb{U}=\mathrm{L}^{2}(\mathbb{T};\mathbb{C})$. We consider the operator $\mathrm{A}_pz(\xi)=z''(\xi)$ with the domain $\mathrm{D}(\mathrm{A}_p)= \mathrm{W}^{2,p}(\mathbb{T};\mathbb{C}).$ The operator $\mathrm{A}_p$ can be written as
	\begin{align*}
	\mathrm{A}_pz&= \sum_{n=1}^{\infty}-n^{2}\langle z, w_{n} \rangle  w_{n},\ \langle z,w_n\rangle :=\int_0^{2\pi}z(\xi)w_n(\xi)\mathrm{d}\xi,
	\end{align*}
	where $-n^2$($n\in\mathbb{Z}$) and $w_n(\xi)=\frac{1}{2\pi}e^{in\xi}$, are the eigenvalues and the corresponding normalized eigenfunctions of the operator $\mathrm{A}_p$. Moreover, we can verify in a similar way as in \cite{SS} that the operator $\mathrm{A}_p$ generate a cosine family $\mathrm{C}_{0,p}(t), t\in\mathbb{R}$ on $\mathbb{X}_p$ which is strongly continuous and the associated sine family $\mathrm{S}_{0,p}(t),t\in\mathbb{R}$ on $\mathbb{X}_p$ is compact. Thus, the condition \textit{(H1)} of Assumption \ref{as} holds. Furthermore, the cosine and sine families can explicitly be written as
	\begin{align*}
	\mathrm{C}_{0,p}(t)z&=\sum_{n\in\mathbb{Z}}\cos(nt)\langle z, w_{n} \rangle  w_{n},\ z\in\mathbb{X}_p,\nonumber\\
	\mathrm{S}_{0,p}(t)z&=t\langle z, w_0 \rangle w_0+\sum_{n\in\mathbb{Z},n\neq0}\frac{1}{n}\sin(nt)\langle z, w_{n} \rangle  w_{n},\ z\in\mathbb{X}_p.
	\end{align*} 
	Let us define $$\mathrm{F}_p(t)z(\xi)=b(t)z'(\xi)\ \text{ on }\ \mathrm{W}^{1,p}(\mathbb{T};\mathbb{C}).$$ It is easy to verify that the linear operator $\mathrm{A}_p(t)=\mathrm{A}_p+\mathrm{F}_p(t)$  is closed. Next, we show that $\mathrm{A}_p+\mathrm{F}_p(t)$ generates an evolution family. For this, we consider the following scalar initial value problem:
	\begin{equation}\label{IVP}
	\left\{
	\begin{aligned}
	h''(t)&=-n^2h(t)+inb(t)h(t),\\
	h(s)&=0, \ h'(s)=w.
	\end{aligned}
	\right.
	\end{equation}
	The solution of the above equation satisfies the integral equation 
	\begin{align*}
	h(t,s)=\frac{w}{n}\sin [n(t-s)]+i\int_{s}^{t}\sin [n(t-\tau)]b(\tau)h(\tau)\mathrm{d}\tau.
	\end{align*}
	By using the Gronwall-Bellman lemma, we obtain 
	\begin{align}\label{4.3}
	|h(t,s)|\leq\frac{|w|}{n}e^{\delta({t-s})},\ \mbox{for}\ s\le t,
	\end{align}
	where $\delta=\sup_{t\in J}|b(t)|$. Let us denote the solution of the initial value problem \eqref{IVP} by $h_n(t,s),$ for each $n\in\mathbb{Z}$ and define 
	\begin{align}
	\mathcal{S}_p(t,s)z=\sum_{n\in\mathbb{Z}}h_n(t,s)\langle z, w_n \rangle w_n, \ z\in\mathbb{X}_p.
	\end{align}  
	The estimate \eqref{4.3} ensures that $\mathcal{S}_p(t,s):\mathbb{X}_p\to\mathbb{X}_p$ is well defined and it satisfies the conditions of Definition \ref{def2.1}.
	
	Let us define $$x(t)(\xi):=y(t,\xi),\ \text{ for }\ t\in J\ \text{ and }\ \xi\in[0,2\pi],$$ and the linear operator $\mathrm{B}:\mathbb{U}\to\mathbb{X}_p$ as  $$\mathrm{B}u(t)(\xi):=\mu(t,\xi)=\int_{0}^{2\pi}K(\zeta,\xi)u(t)(\zeta)\mathrm{d}\zeta=\ t\in J,\ \xi\in [0,2\pi],$$ where $K\in\mathrm{C}([0,2\pi]\times[0,2\pi];\mathbb{R})$ is an one-one operator with $K(\zeta,\xi)=K(\xi,\zeta)$. 
	Let us estimate 
	\begin{align*}
	\norm{\mathrm{B}u(t)}_{\mathbb{X}_p}^p=\int_{0}^{2\pi}\left|\int_{0}^{2\pi}K(\zeta,\xi)u(t)(\zeta)\mathrm{d}\zeta\right|^p\mathrm{d}\xi.
	\end{align*}
	Applying the Cauchy-Schwarz inequality, we have
	\begin{align*}
	\norm{\mathrm{B}u(t)}_{\mathbb{X}_p}^p&\le\int_{0}^{2\pi}\left[\left(\int_{0}^{2\pi}|K(\zeta,\xi)|^2\mathrm{d}\zeta\right)^{\frac{1}{2}}\left(\int_{0}^{2\pi}|u(t)(\zeta)|^2\mathrm{d}\zeta\right)^{\frac{1}{2}}\right]^{p}\mathrm{d}\xi\\&=\left(\int_{0}^{2\pi}|u(t)(\zeta)|^2\mathrm{d}\zeta\right)^{\frac{p}{2}}\int_{0}^{2\pi}\left(\int_{0}^{2\pi}|K(\zeta,\xi)|^2\mathrm{d}\zeta\right)^{\frac{p}{2}}\mathrm{d}\xi.
	\end{align*}
	Since the kernel $K(\cdot,\cdot)$ is continuous, we arrive at
	\begin{align*}
	\norm{\mathrm{B}u(t)}_{\mathbb{X}_p}\le C\norm{u(t)}_{\mathbb{U}},
	\end{align*}
	so that we get 
	$	\norm{\mathrm{B}}_{\mathcal{L}(\mathbb{U};\mathbb{X}_p)}\le C.$
	Hence, the operator $\mathrm{B}$ is bounded. Moreover, the symmetry of the kernel  implies that the operator $\mathrm{B}=\mathrm{B}^*$ (self adjoint). In particular, one can take $K(\xi,\zeta)=1+\xi^2+\zeta^2,\ \mbox{for all}\ \xi, \zeta\in [0,2\pi]$.
	
	Let us now define $f:J_1=\bigcup_{i=0}^{N}[s_i,t_{i+1}]\times \mathcal{D}\rightarrow  \mathbb{X}_p$  as  
	\begin{align}
	f(t,x_{t})(\xi):= k_{0}\cos\left(\frac{2\pi t}{T}\right)\sin(x_{t}), \ \xi\in[0,2\pi],
	\end{align}
	where $k_{0}$ is some positive constant and $\mathcal{D}$ is given in \eqref{1.2}. Clearly, $f$ is continuous  and 
	\begin{align*}
	\left\|f(t,x_t)\right\|_{\mathrm{L}^p}=k_0\left(\int_{0}^{2\pi}\left|\cos\left(\frac{2\pi t}{T}\right)\sin(x_{t}(\zeta))\right|^p\mathrm{d}\zeta\right)^{1/p}\le k_0({2\pi})^{\frac{1}{p}} 
	\left|\cos\left(2\pi t\right)\right|=\gamma(t).
	\end{align*}
	It is clear that the function $\gamma(t)=k_0({2\pi})^{\frac{1}{p}}\left|\cos\left(\frac{2\pi t}{T}\right)\right|\in\mathrm{L^{1}}(J_1;\mathbb{R^{+}})\cap\mathrm{L^{2}}(J_1;\mathbb{R^{+}})$
	Hence, the condition (\textit{$H2$}) of Assumption \ref{as} and the condition (\textit{$H5$}) of Assumption \ref{as1} is satisfied. 
	
	Next, we define the impulse functions $\rho_{i,p}:[t_i,s_i]\times\mathbb{X}_p\to\mathbb{X}_p,$ for each $i=1,\ldots,N$ are defined as 
	\begin{align*}
	\rho_{i,p}(t,x)(\xi):=\int_{0}^{2\pi}g_i(t,\xi,z)\cos^2(x(t_i^-)z)\mathrm{d}z, 
	\end{align*}
	where, $g_i\in\mathrm{C}^{1}([0,T]\times[0,2\pi]^2;\mathbb{R})$. It is not difficult to verify that the impulses $\rho_{i,p},$ for $i=1,\ldots,N$ satisfy the condition \textit{$(H3)$} of Assumption \ref{as}. 
	
	Finally, we consider the nonlocal initial conditions $ g_p,h_p :\mathrm{PC}([-r,T];\mathbb{X}_p)\rightarrow \mathbb{X}_p$ as
	\begin{align*}
	g_p(x)(t)&=g(y(t, \xi)),\ \xi\in[0,2\pi],\\
	h_p(x)(t)&=h(y(t, \xi)),\ \xi\in[0,2\pi].
	\end{align*}
	Let us choose 
	\begin{align*}
	g_p(y(t, \xi))&= \int_{-r}^{\tau}\varrho(s)\log(1+|y(s, \xi)|)d s,\ \xi\in[0, 2\pi], \\
	h_p(y(t, \xi))&= \sum\limits_{j=1}^{q}c_{j}y(\tau_{j}, \xi), \  -r<\tau_{1}<\tau_{2},\ldots,<\tau_{q}<T,\ \xi\in[0, 2\pi],
	\end{align*}
	where $ \varrho\in\mathrm{L}^{1}([-r,\tau]; \mathbb{R})$ and $c _{j}$ for $j=1, \dots, q,$ are positive constants. Let $\left\|\varrho\right\|_{\mathrm{L}^1([-r,\tau];\mathbb{R})}\le l$. Hence, the functions $g_p(\cdot)$ and $h_p(\cdot)$ satisfy the condition \textit{(H4)} of Assumption \ref{as}, whenever the constant $l$ and  $c_j$'s are small enough (see \cite{lia} for more details).
	
	Using the above notions, the system \eqref{ex} can be expressed as \eqref{1.1}-\eqref{1.5} and it satisfies Assumption \ref{as} \textit{(H1)-(H4)} and Assumption \ref{as1} (\textit{H5}). Finally, we verify that the linear problem corresponding to the system \eqref{1.1}-\eqref{1.5} is approximately controllable. To complete this, we consider (see Remark \ref{rm})
	$$\mathrm{B}^*\mathcal{S}_{p}(T,t)^*x^*=0,\ \mbox{ for any}\ x^*\in\mathbb{X}^*,\ t\in J.$$  By using definition of $\mathrm{B}^*$, we obtain 
	\begin{align*}
 \int_0^{2\pi}K(\zeta,\xi)	\mathcal{S}_{p}(T,t)^*x^*(\zeta)\mathrm{d}\zeta=0, \ \mbox{ for any}\ x^*\in\mathbb{X}^*, t\in J.
	\end{align*}
Since $K(\cdot,\cdot)$ is one-one, then we have 
	\begin{align*}
	\mathcal{S}_{p}(T,t)^*x^*=0 \ \mbox{ for all }\ t\in J.
	\end{align*}
	The property of $\mathcal{S}_{p}(T,t)^*$ also implies that 
	\begin{align*}
	\frac{\partial}{\partial t}\mathcal{S}_{p}(T,t)^*x^*=0  \ \mbox{ for all }\ t\in J.
	\end{align*}
	Using the condition (D1)-(b) in Definition \ref{def2.1}, we obtain $x^*=0$. Thus, the linear system is approximately controllable. Finally, Theorem \ref{th3.1} yield that the system \eqref{ex} is approximately controllable.
	
	\medskip\noindent
	{\bf Acknowledgments:} S. Arora would like to thank Council of Scientific and Industrial Research, New Delhi, Government of India (File No. 09/143(0931)/2013 EMR-I), for financial support to carry out his research work and Department of Mathematics, Indian Institute of Technology Roorkee (IIT Roorkee), for providing stimulating scientific environment and resources.

\end{document}